\newcommand{\rosso}[1]{{#1}}
\providecommand{\doi}[1]{%
  \begingroup
    \let\bibinfo\@secondoftwo
    \urlstyle{rm}%
    \href{http://dx.doi.org/#1}{%
      doi:\discretionary{}{}{}%
      \nolinkurl{#1}%
    }%
  \endgroup
}
\theoremstyle{plain}
\newtheorem{theorem}{Theorem}
\newtheorem{lemma}{Lemma}
\theoremstyle{definition}
\newtheorem{definition}{Definition}
\newtheorem{remark}{Remark}
\newcommand{\bbeta}{\boldsymbol{\beta}}
\newcommand{\Hbbeta}{\widehat{\bbeta}}
\newcommand{\HbbetaR}{\Hbbeta^{(\mathbf{R})}}
\title{Best estimation of functional linear models}
\author{Giacomo Aletti}
\address{ADAMSS Center \& Department of Mathematics, Universit\`a degli Studi di Milano, 20131 Milano, Italy}
\email{giacomo.aletti@unimi.it}
\author{Caterina May}
\address{University of Eastern Piedmont, Italy}
\email{caterina.may@unipmn.it}
\author{Chiara Tommasi}
\address{Universit\`a degli Studi di Milano, Italy}
\email{chiara.tommasi@unimi.it}
\begin{document}
\maketitle

\begin{abstract}
Observations which are realizations from some continuous process are frequent in sciences, engineering, economics, and other fields. We consider linear models, with possible random effects, where the responses are random functions in a suitable Sobolev space. The processes \rosso{cannot} be observed directly. With smoothing procedures from the original data, both the response curves and their derivatives can be reconstructed, even separately. From both these samples of functions, just one sample of representatives is obtained to estimate the vector of functional parameters. \rosso{A simulation study shows the benefits of this approach over the common method of using information either on curves or derivatives. The main theoretical result is a strong functional version of the Gauss-Markov theorem. This ensures that the proposed functional estimator is more efficient than the best linear unbiased estimator based only on curves or derivatives.}
\end{abstract}

\vspace*{0.5cm}

\noindent\textbf{Keywords}: functional data analysis; Sobolev spaces; linear models; repeated measurements; Gauss-Markov theorem; Riesz representation theorem; best linear unbiased estimator.

\section{Introduction}\label{sect:intro}
Observations which are realizations from some continuous process are ubiquitous in many fields like sciences, engineering, economics and other fields. For this reason, the interest for statistical modeling of functional data is increasing, with applications in many areas. Reference monographs on functional data analysis are, for instance, the books of \citet{Ramsay:Silverman05} and \citet{HorKok:HK12}, and the book of \citet{Ferr:V06} for the non-parametric approach. They cover topics like data representation, smoothing and registration; regression models; classification, discrimination and principal component analysis; derivatives and principal differential analysis; and \rosso{many} other.

Regression models with functional variables can cover different situations: it may be the case of functional responses, or functional predictors, or both. In the present paper linear models with functional response and multivariate (or univariate) regressor are considered. We consider the case of repeated measurements but all the theoretical results remain valid in the standard case. Focus of the work is the best estimation of the functional coefficients of the regressors.

The use of derivatives is very important for exploratory analysis of functional data as well as for inference and prediction methodologies.
High quality derivative information can be provided, for instance, by reconstructing the functions with spline smoothing procedures. Recent developments on estimation of derivatives are contained in  \citet{sangalli09} and in \citet{pigoli12}. See also \citet{Baraldo13}, who have obtained derivatives in the context of survival analysis, and \citet{Hall09} who have estimated derivatives in a non-parametric model.

Curves and derivatives are actually reconstructed from a set of observed values, because
the response processes cannot be observed directly.
In the literature the usual space  for functional data is $L^2$, and the observed values are used to reconstruct either curve functions or derivatives. 

To our knowledge, the most common method to reconstruct derivatives is to build the sample of functions by a smoothing procedure of the data, and then to differentiate these curve functions.  
However, the sample of functions and the sample of derivatives may be obtained separately.
For instance, different smoothing techniques may be used to obtain the  functions and the derivatives. Another possibility is when two sets of data are available, which are suitable to estimate functions and derivatives, respectively.

\rosso{Some possible examples of data concerning curves and derivatives are: in studying how the velocity of a car on a particular street is influenced by some covariates, the velocity is measured by a police radar; in addition we could benefit of more information since its position is tracked by a GPS. In chemical experiments, data on reaction velocity and concentration may be collected separately.}

The novelty of the present work is that both information on curves and derivatives (that are not obtained by  differentiation of the curves themselves) are used to estimate the functional coefficients.

The heuristic justification for this choice is that the data may provide different information on curve functions and their derivatives and it is always recommended to use the whole available information.
Actually, we prove that if we take into consideration both information about curves and derivatives, we obtain the best linear unbiased estimates for the functional coefficients. Therefore, the common method of using information on either  curve functions or  derivatives provides always a less efficient estimate (see Theorem \ref{GM-Repeated} and Remark \ref{R2}).
For this reason, our theoretical results may have a relevant  impact in practice.

More in detail, in analogy with the Riesz representation theorem we can find a representative function in $H^1$ which incorporates the information provided by a curve function and a derivative (which belong to $L^2$). Hence, from the two samples of reconstructed  functions and derivatives  just one sample of representatives  is obtained and 
we use this sample of representatives to estimate the functional parameters.
\rosso{Once this method is given, the consequent theoretical results may appear as a
straightforward extension of the well-known classical ones; their
proof, however, requires much more technical effort and it is not a straightforward extension at all.

The OLS estimator (based on both curves and derivatives through their Riesz representatives in $H^1$) is provided and some practical considerations are drawn. In general, the OLS estimator is not a BLUE, because of the possible correlation between curves and derivatives. Therefore, a different representation of the data is provided (which takes into into account this correlation) and then}
a new version of the Gauss-Markov theorem is  proved
in the proper infinite-dimensional space ($H^1$), showing that our 
sample of representatives carries all the relevant information on the parameters.
\rosso{More in detail, we propose an unbiased estimator which is linear with respect to the new sample of representatives and which minimizes a suitable covariance matrix (called global variance). This estimator is denoted $H^1$-functional SBLUE.

A simulation study shows numerically the superiority of the $H^1$-functional SBLUE with respect to both the OLS estimators based only on curves or derivatives. This suggests that both sources of information should be used jointly, when available. A rough way of considering information on both curves and derivatives is to make a convex combination of the two OLS estimators. However, simulations show that the $H^1$-functional SBLUE is more efficient, as expected.}

The paper is organized as follows. Section \ref{sect:model} describes the model and proposes the OLS estimator obtained from the Riesz representation of the data. \rosso{Section \ref{sect:practice} explains some considerations which are fundamental from a practical point of view. Section \ref{sec:SBLUE} presents the construction of the functional strong BLUE. Finally, Section \ref{sect:simul}  is devoted to the simulation study.} Section \ref{sect:discuss} is a summary together with some final remarks.
Some additional results and the proofs of theorems are deferred to \ref{appendix}.

\section{\rosso{Model description and Riesz representation}}\label{sect:model}

Let us consider a regression model where the
response $y$ is a random function which depends linearly on a vectorial (or scalar) known variable $\bf x$ through a functional coefficient, which needs to be estimated.
In particular, we assume that \rosso{there are $n$ units (subjects or clusters), and  $r\ge 1$ observations per unit at a condition $\mathbf{x}_i$  ($i=1, \dots, n$). Note that $\mathbf{x}_1, \dots \mathbf{x}_n$ are not necessarily different.
In this context of repeated measurements,} we consider
 the following random effect model:
\begin{equation}\label{eq:model2}
\begin{aligned}
{y}_{ij}(t) & = \mathbf{f}(\mathbf{x}_i)^T \boldsymbol{\beta}(t) + {\alpha}_i(t) + {\varepsilon}_{ij}(t)
\end{aligned} \qquad i= 1,\ldots,n;\  j=1,\ldots,r,
\end{equation}
where: \rosso{$t$ belongs to a compact set $\tau\subseteq \mathbb R$}; ${y}_{ij}(t)$ denotes the response curve of the $j$-th observation at the $i$-th experiment;
$\mathbf{f}(\mathbf{x}_i)$ is a $p$-dimensional vector of known functions;  $\boldsymbol{\beta}(t) $
is an unknown $p$-dimensional functional vector;
${\alpha}_i(t)$ is a zero-mean process which denotes the random effect due to the $i$-th experiment and takes into account the correlation among the $r$ repetitions; ${\varepsilon}_{ij}(t)$ is a zero-mean error process.

\rosso{Let us note that we are interested in precise estimation of the fixed effects $\boldsymbol{\beta}(t)$; herein the random effects are nuisance parameters.}

An example for the model \eqref{eq:model2}  \rosso{can be found in \citet{Shen04}, where an ergonomic problem is considered (in this case there are $n$ clusters of observations for the same individual)}; if $r=1$ this model reduces to the functional response model described, for instance, in \citet{HorKok:HK12}.

In a real world setting, the functions ${y}_{ij}(t)$ are not directly observed. By a smoothing procedure from the original data, the investigator can reconstruct both the functions and their first derivatives, obtaining ${y}_{ij}^{(f)}(t)$ and ${y}_{ij}^{(d)}(t)$, respectively. Hence we can assume that the model for the reconstructed functional data is
\begin{equation}\label{eq:rMeas}
\left\{
\begin{aligned}
{y}_{ij}^{(f)}(t) & = \mathbf{f}(\mathbf{x}_i)^T {\bbeta}(t) + {\alpha}_i^{(f)}(t) + {\varepsilon}_{ij}^{(f)}(t)\\
{y}_{ij}^{(d)}(t) & = \mathbf{f}(\mathbf{x}_i)^T {\bbeta}'(t) + {\alpha}_i^{(d)}(t) + {\varepsilon}_{ij}^{(d)}(t)
\end{aligned} \qquad i= 1,\ldots,n;\  j=1,\ldots,r,
\right.
\end{equation}
\rosso{where
\begin{enumerate}
\item \label{hp:1}
the $n$ couples $({\alpha}_i^{(f)}(t),{\alpha}_i^{(d)}(t))$ are independent and identically distributed bivariate vectors of zero-mean processes such that
$E(\|{\alpha}_i^{(f)}(t)\|^2_{L^2(\tau)}+\|{\alpha}_i^{(d)}(t)\|^2_{L^2(\tau)})<\infty$, that is, $({\alpha}_i^{(f)}(t),{\alpha}_i^{(d)}(t)) \in L^2(\Omega; \mathbf{L}^2 )$, where $\mathbf{L}^2 = L^2(\tau) \times L^2(\tau) $;
\item \label{hp:2}
the $n\times r$ couples $({\varepsilon}_{ij}^{(f)}(t),{\varepsilon}_{ij}^{(d)}(t))$ are independent and identically distributed bivariate vectors of zero mean processes processes, with  $E(\|{\varepsilon}_{ij}^{(f)}(t)\|^2_{L^2}+\|{\varepsilon}_{ij}^{(d)}(t)\|^2_{L^2})<\infty$.
\end{enumerate}
As a consequence of the above assumptions: the data ${y}_{ij}^{(f)}(t)$ and ${y}_{ij}^{(d)}(t)$ can be correlated; the couples $({y}_{ij}^{(f)}(t), {y}_{ij}^{(d)}(t))$ and $({y}_{kl}^{(f)}(t), {y}_{kl}^{(d)}(t))$ are independent whenever $i\neq k$. The possible correlation between $({y}_{ij}^{(f)}(t), {y}_{ij}^{(d)}(t))$ and $({y}_{il}^{(f)}(t), {y}_{il}^{(d)}(t))$ is due to the common random effect $({\alpha}_i^{(f)}(t),{\alpha}_i^{(d)}(t))$. } 

Note that the investigator might reconstruct each function ${y}_{ij}^{(f)}(t)$ and its derivative ${y}_{ij}^{(d)}(t)$ separately. In this case, the right-hand
term of the second equation in \eqref{eq:rMeas} is not the derivative of the right-hand term of the first equation. The particular case
when ${y}_{ij}^{(d)}(t)$ is obtained by differentiation ${y}_{ij}^{(f)}(t)$ is the most simple situation in model \eqref{eq:rMeas}.

Let $\mathbf{B}(t)$ be an estimator of $\bbeta (t)$, formed by $p$ random functions in the Sobolev space $H^1
$.
Recall that a function $g(t)$ is in $H^1$ if $g(t)$ and its derivative $g'(t)$ belong to $L^2$. Moreover, $H^1$ is a Hilbert
space with inner product
\begin{equation}
\label{eq:ip}
\begin{aligned}
\langle g_1(t), g_2(t) \rangle_{H^1} & = \langle g_1(t), g_2(t) \rangle_{L^2} + \langle g'_1(t), g'_2(t) \rangle_{L^2} 
\\
& =\rosso{\langle (g_1(t), g'_1(t)) , (g_2(t), g'_2(t)) \rangle_{\mathbf{L}^2}} \\
& = \int g_1(t) g_2(t) dt + \int g'_1(t) g'_2(t) dt , \qquad
g_1(t),g_2(t)\in H^1.
\end{aligned}
\end{equation}

\begin{definition}\label{defi:sigma}
We define the $H^1$-\emph{\rosso{global} covariance matrix} $\Sigma_{\mathbf{B}}$ of an unbiased estimator $\mathbf{B} (t)$  as the $p\times p$ matrix whose
$(l_1,{l_2})$-th element is
 \begin{equation}\label{eq:sigma}
 E \langle {B}_{l_1}(t)-\beta_{l_1}(t),{B}_{{l_2}}(t)-\beta_{{l_2}}(t)\rangle_{H^1}. 
 \end{equation}
\end{definition}
This global notion of covariance has been used also in \citet[Definition~2]{Menafoglio13}, in the context of predicting georeferenced functional data.
These authors have found a BLUE estimator for the drift of their underlying process, which can be seen as an example of the results given in this paper.

Given a couple $({y}^{(f)}(t),{y}^{(d)}(t)) \in L^2\times L^2$, it may be defined a linear continuous operator on $H^1$
as follows
\begin{equation*}
\phi(h)
= \langle {y}^{(f)} , h \rangle_{L^2} + \langle {y}^{(d)} , h' \rangle_{L^2}
\rosso{= \langle ({y}^{(f)} , {y}^{(d)}) \,,\, (h,h') \rangle_{\mathbf{L}^2}}, \qquad \forall h\in H^1.
\end{equation*}
From the  Riesz representation theorem, there exists a unique ${\tilde{y}} \in H^1$ such that
\begin{equation}\label{eq:rGM0}
\langle \tilde{y} , h \rangle_{H^1} 
= \langle {y}^{(f)} , h \rangle_{L^2} + \langle {y}^{(d)} , h' \rangle_{L^2}, \qquad \forall h\in H^1.
\end{equation}
\begin{definition}
\label{DEf:def3}
The unique element $\tilde{{y}} \in H^1$ defined in \eqref{eq:rGM0} is called the \emph{Riesz representative} of the couple $({y}^{(f)}(t),{y}^{(d)}(t)) \in \mathbf{L}^2$.
\end{definition}
This definition will be useful to provide a nice expression for the functional OLS estimator ${\Hbbeta}(t)$. Actually the Riesz
representative synthesizes, in some sense, in $H^1$ the information of both ${y}^{(f)}(t)$  and ${y}^{(d)}(t) $.

\rosso{Note that, since
	\[
\langle ({y}^{(f)} , {y}^{(d)}) - (\tilde{{y}},\tilde{{y}}') \,,\, (h,h') \rangle_{\mathbf{L}^2} =0,\qquad \forall h\in H^1
	\]
the Riesz
representative $(\tilde{{y}},\tilde{{y}}')$ may be seen as the projection of $({y}^{(f)} , {y}^{(d)})\in {\mathbf{L}^2}$ onto the immersion of $H^1$ in ${\mathbf{L}^2}$, a linear closed subspace.}

The \emph{functional OLS estimator} for the model \eqref{eq:rMeas} is
\begin{align*}
{\Hbbeta} (t) & = \arg\min_{\bbeta(t)} \Big(
\sum_{j=1}^r \sum_{i=1}^n \|
{y}_{ij}^{(f)}(t) - \mathbf{f}(\mathbf{x}_i)^T {\bbeta}(t)
\|^2_{L^2} +
\sum_{j=1}^r \sum_{i=1}^n \|
{y}_{ij}^{(d)}(t) - \mathbf{f}(\mathbf{x}_i)^T {\bbeta}'(t)
\|^2_{L^2}
\Big)
\\
& = \arg\min_{\bbeta(t)}
\sum_{j=1}^r \sum_{i=1}^n \Big( \|
{y}_{ij}^{(f)}(t) - \mathbf{f}(\mathbf{x}_i)^T {\bbeta}(t)
\|^2_{L^2} + \|
{y}_{ij}^{(d)}(t) - \mathbf{f}(\mathbf{x}_i)^T {\bbeta}'(t)
\|^2_{L^2}
\Big)
\end{align*}
The quantity
\[
\|
{y}_{ij}^{(f)}(t) - \mathbf{f}(\mathbf{x}_i)^T {\bbeta}(t)
\|^2_{L^2} +\|
{y}_{ij}^{(d)}(t) - \mathbf{f}(\mathbf{x}_i)^T {\bbeta}'(t)
\|^2_{L^2}
\]
resembles
\[
\|
{y}_{ij}(t) - \mathbf{f}(\mathbf{x}_i)^T {\bbeta}(t)
\|^2_{H^1} ,
\]
because ${y}_{ij}^{(f)}(t)$ and ${y}_{ij}^{(d)}(t)$ reconstruct ${y}_{ij}(t)$  and its derivative function, respectively. The functional OLS estimator ${\Hbbeta} (t)$ minimizes, in this sense, the sum of the $H^1$-norm of the unobservable residuals ${y}_{ij}(t) - \mathbf{f}(\mathbf{x}_i)^T {\bbeta}(t)$.
\begin{theorem}\label{thm:Riesz_representation}
Given model in \eqref{eq:rMeas},
\begin{itemize}
\item[a)] the functional OLS estimator ${\Hbbeta} (t)$ can be computed by
\begin{equation}\label{eq:Hbbeta}
{\Hbbeta} (t) = (F^TF)^{-1} F^T \bar{\mathbf{y}}(t),
\end{equation}
where $\bar{\mathbf{y}}(t) = (\bar{y}_1 (t), \ldots ,\bar{y}_n (t))\rosso{^T}$ is a vector, whose component $i$-th is the mean of the Riesz representatives of the replications:
\[
\bar{y}_i (t) = \frac{\sum_{j=1}^r \tilde{y}_{ij}(t)}{r},
\]
and $F = [\mathbf{f}(\mathbf{x}_1) ,\ldots, \mathbf{f}(\mathbf{x}_n)]^T$ is the $n\times p$ design matrix.
\item[b)] The estimator ${\Hbbeta} (t)$ is unbiased and its \rosso{global} covariance matrix is 
$\rosso{\sigma^2}(F^TF)^{-1}$.
\end{itemize}
\end{theorem}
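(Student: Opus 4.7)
The plan is to convert the $\mathbf{L}^2$-based OLS criterion into a single $H^1$-criterion written through the Riesz representatives, after which the problem formally mimics ordinary linear regression in a Hilbert space. For part (a), I would expand the two sums of squared $L^2$-norms in the definition of $\Hbbeta(t)$ and apply the defining identity \eqref{eq:rGM0} with the test element $h=\mathbf{f}(\mathbf{x}_i)^T\bbeta(t)\in H^1$. The two cross terms $\langle y_{ij}^{(f)},\mathbf{f}(\mathbf{x}_i)^T\bbeta\rangle_{L^2}+\langle y_{ij}^{(d)},\mathbf{f}(\mathbf{x}_i)^T\bbeta'\rangle_{L^2}$ then collapse into the single $H^1$-inner product $\langle\tilde y_{ij},\mathbf{f}(\mathbf{x}_i)^T\bbeta\rangle_{H^1}$, while by the very definition \eqref{eq:ip} of the $H^1$ inner product the norm terms $\|\mathbf{f}(\mathbf{x}_i)^T\bbeta\|_{L^2}^2+\|\mathbf{f}(\mathbf{x}_i)^T\bbeta'\|_{L^2}^2$ add up to $\|\mathbf{f}(\mathbf{x}_i)^T\bbeta\|_{H^1}^2$. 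Up to a constant that does not depend on $\bbeta$, the objective thus reduces to $\sum_{i,j}\|\tilde y_{ij}-\mathbf{f}(\mathbf{x}_i)^T\bbeta\|_{H^1}^2$.

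I would then take the G\^ateaux derivative of this convex quadratic functional along each component direction $h_l\in H^1$ and set it to zero. The resulting functional normal equations $\sum_{i,j}\mathbf{f}(\mathbf{x}_i)\langle\tilde y_{ij}-\mathbf{f}(\mathbf{x}_i)^T\bbeta,h_l\rangle_{H^1}=0$, combined with the identities $\sum_{i,j}\mathbf{f}(\mathbf{x}_i)\mathbf{f}(\mathbf{x}_i)^T=rF^TF$ and $\sum_{i,j}\mathbf{f}(\mathbf{x}_i)\tilde y_{ij}=rF^T\bar{\mathbf{y}}$, yield the closed form \eqref{eq:Hbbeta}. For the unbiasedness part of (b), I would use linearity and continuity of the Riesz representation mapping $\mathbf{L}^2\to H^1$ to exchange it with expectation: $E[\tilde y_{ij}]$ is then the Riesz representative of $(\mathbf{f}(\mathbf{x}_i)^T\bbeta,\mathbf{f}(\mathbf{x}_i)^T\bbeta')$, which by uniqueness equals $\mathbf{f}(\mathbf{x}_i)^T\bbeta$ itself. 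Hence $E[\bar{\mathbf{y}}]=F\bbeta$ and $E[\Hbbeta]=\bbeta$.

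For the global covariance I would write $\Hbbeta-\bbeta=(F^TF)^{-1}F^T(\bar{\mathbf{y}}-E\bar{\mathbf{y}})$ and use linearity of the Riesz map to decompose $\tilde y_{ij}=\mathbf{f}(\mathbf{x}_i)^T\bbeta+\tilde\alpha_i+\tilde\varepsilon_{ij}$, where $\tilde\alpha_i$ and $\tilde\varepsilon_{ij}$ are the Riesz representatives of the random effect and the error. The i.i.d.\ hypotheses (\ref{hp:1})--(\ref{hp:2}) transfer to $H^1$, so the centered variables $\bar y_i-\mathbf{f}(\mathbf{x}_i)^T\bbeta$ are independent across $i$ and identically distributed, with a common value $\sigma^2:=E\|\bar y_i-\mathbf{f}(\mathbf{x}_i)^T\bbeta\|_{H^1}^2$. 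Applying Definition \ref{defi:sigma} bilinearly, only the diagonal $i_1=i_2$ contributes, leaving $\sigma^2(F^TF)^{-1}F^TF(F^TF)^{-1}=\sigma^2(F^TF)^{-1}$. The main delicate step is not algebraic but measure-theoretic: justifying that the Riesz map commutes with expectation on random elements of $\mathbf{L}^2$ and preserves independence, which relies on its continuity (it is precisely the orthogonal projection onto the embedded copy of $H^1$ in $\mathbf{L}^2$, per the remark after Definition \ref{DEf:def3}) together with the $L^2(\Omega;\mathbf{L}^2)$ integrability enforced by (\ref{hp:1})--(\ref{hp:2}).
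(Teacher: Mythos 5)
Your proposal is correct and follows essentially the same route as the paper: both hinge on using the defining identity \eqref{eq:rGM0} to collapse the pair of $L^2$ inner products into a single $H^1$ inner product, then solving the resulting normal equations, and both establish part (b) via the decomposition of $\bar y_i-\mathbf{f}(\mathbf{x}_i)^T\bbeta$ into the Riesz representatives $\tilde\alpha_i+\tfrac1r\sum_j\tilde\varepsilon_{ij}$ and the i.i.d.\ hypotheses. The only (cosmetic) difference is that you rewrite the objective as $\sum_{i,j}\|\tilde y_{ij}-\mathbf{f}(\mathbf{x}_i)^T\bbeta\|^2_{H^1}$ up to a constant before differentiating, whereas the paper computes the G\^ateaux derivative of the $L^2$ criterion first and identifies the Riesz representative inside the derivative; your explicit remark on commuting the Riesz projection with expectation is a welcome justification of a step the paper leaves implicit.
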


\begin{remark}\label{rem:OLSlambda}
	The previous results may be generalized to other Sobolev spaces. The extension to $H^m$, $m\geq 2$,
	is straightforward.
	Moreover, in Bayesian context, the investigator might have a different \emph{a priori} consideration of ${y}_{ij}^{(f)}(t)$ and ${y}_{ij}^{(d)}(t)$.
	Thus, different weights may be used for curves and derivatives, and the inner product given in 
	\eqref{eq:ip} may be extended to
	\[
	\langle g_1(t), g_2(t) \rangle_{H} = \lambda \int_\tau g_1(t) g_2(t) dt + (1-\lambda) \int_\tau g'_1(t) g'_2(t) dt,
	\qquad \lambda \in [0,1].
	\]
Let $ {\Hbbeta}_\lambda (t) $  be the OLS estimator obtained by using this last inner product. Note that, 
for $\lambda= \tfrac{1}{2}$, we obtain ${\Hbbeta}_{\tfrac{1}{2}} (t) = {\Hbbeta} (t) $ defined in
 Theorem~\ref{thm:Riesz_representation}.
The behavior of the $ {\Hbbeta}_\lambda (t) $   is explored in Section~\ref{sect:simul} for different choices
of $\lambda$.
\end{remark}

\section{Practical considerations}\label{sect:practice}
In a real world context, we work with a finite dimensional subspace $\mathcal{S}$ of $H^1$. Let $S=\{w_1(t), \ldots, w_N(t) \}$ be a base of
$\mathcal{S}$. Without loss of generality,
we may assume that $\langle w_h(t), w_k(t)\rangle_{H^1} = \delta_{h}^k$,
where
\[
\delta_{h}^k=
\begin{cases}
1 & \text{if }h=k; \\
0 & \text{if }h\neq k;
\end{cases}
\]
is the Kronecker delta symbol, since a Gram-Schmidt orthonormalization
procedure may be always applied.
More precisely,
given any base
$\tilde{S}=\{\tilde{w}_1(t), \ldots, \tilde{w}_N(t) \}$ in $H_1$, the corresponding orthonormal base is given by:

for $k= 1$, define $w_1(t) = \frac{\tilde{w}_1(t)}{\|\tilde{w}_1(t)\|_{H^1}}$,

for $k\geq 2$, let
\(
\hat{w}_k(t) = \tilde{w}_k(t) - \sum_{h=1}^{n-1} \langle \tilde{w}_k(t) , {w}_h(t) \rangle_{H^1} {w}_h(t) , \) and \(
w_k(t) = \frac{\hat{w}_k(t)}{\|\hat{w}_k(t)\|_{H^1}}.
\)

With this orthonormalized base, the projection $\tilde{y}(t)_{\mathcal{S}}$ on $\mathcal{S}$ of the Riesz representative $\tilde{y}(t)$
of the couple $({y}^{(f)}(t),{y}^{(d)}(t)) $ is given by
\begin{equation}\label{eq:y_on_S}
\begin{aligned}
\tilde{y}(t)_{\mathcal{S}} &= \sum_{k=1}^N \langle \tilde{y}(t) , {w}_k(t) \rangle_{H^1} \cdot
{w}_k(t) \\
&= \sum_{k=1}^N \Big(\langle {y}^{(f)}(t) , {w}_k(t) \rangle_{L^2} + \langle {y}^{(d)}(t) , {w}'_k(t) \rangle_{L^2}\Big)
{w}_k(t) ,
\end{aligned}
\end{equation}
where the last equality comes from the definition \eqref{eq:rGM0} of the Riesz representative. Now, if $\mathbf{m}_{l} = ({m}_{l,1}, \ldots , {m}_{l,n})^T$ is the ${l}$-th row of $(F^TF)^{-1}F^T$, then
\begin{align*}
\langle\hat{\bbeta}_{l}(t) , {w}_k(t) \rangle_{H^1}
& = \sum_{i=1}^n
\langle {m}_{l,i} \bar{{y}_i}(t) , {w}_k(t) \rangle_{H^1}
\\
& =
\sum_{i=1}^n
{m}_{l,i} \langle \bar{{y}_i}(t) , {w}_k(t) \rangle_{H^1}
, \qquad \text{for any }k=1,\ldots, N, \\
{\hat{\bbeta}_{l}}(t)_{\mathcal{S}}
& = \mathbf{m}_{l}^T \bar{\mathbf{y}}(t)_{\mathcal{S}},
\end{align*}
hence ${\Hbbeta}(t)_{\mathcal{S}} = (F^TF)^{-1} F^T \bar{\mathbf{y}}(t)_{\mathcal{S}}$.

\bigskip

Let us note that, even if the Riesz representative \eqref{eq:rGM0} is implicitly defined, its projection on $\mathcal{S}$
can be easily computed by \eqref{eq:y_on_S}. From a practical point of view, the statistician can work with the data
$( {y}_{ij}^{(f)}(t), {y}_{ij}^{(d)}(t))$ projected on a finite linear subspace $\mathcal{S}$ and the corresponding OLS estimator
$\Hbbeta(t)_{\mathcal{S}}$ is the projection on $\mathcal{S}$ of the
OLS estimator $\Hbbeta(t)$ given in
Section~\ref{sect:model}.

It is straightforward to prove that the estimator \eqref{eq:Hbbeta} becomes
\[
\Hbbeta (t) = (F^TF)^{-1}F^T {\mathbf{y}}^{(f)}(t),
\]
in two cases: when we do not take into consideration $y^{(d)}$, or when $y^{(d)} = (y^{(f)})'$.
Up to our knowledge,
this is the most common situation considered in the literature (see \citet[Chapt. 13]{Ramsay:Silverman05}).
\rosso{However, from the simulation study of Section \ref{sect:simul}, the OLS estimator $\Hbbeta$ is less efficient when it is based only on $y^{(f)}$.}

\section{Strong $H^1$-BLUE in functional linear models}\label{sec:SBLUE}

Let $\mathbf{B}(t)=\mathbf{C}({\mathbf{y}}^{(f)} (t),{\mathbf{y}}^{(d)} (t))$, where
$\mathbf{C}: \mathcal{R}\subseteq (\mathbf{L}^2)^{n r}\to (H^1)^p$ is a linear closed operator; in this case $\mathbf{B}(t)$ is called a {\it linear estimator}.
The domain of $C$, denoted by $\mathcal{R}$, will be defined in \eqref{defn:DomR}. 
Theorem~\ref{cor:yR-eR} will ensure that the dataset $({\mathbf{y}}^{(f)} (t),{\mathbf{y}}^{(d)} (t))$ 
is contained in $\mathcal{R}$.

\begin{definition}
	In analogy with classical settings, we define the $H^1$-functional best linear unbiased estimator ($H^1$-BLUE) as the estimator with minimal
	(in the sense of Loewner Partial Order\footnote{Given two symmetric matrices $A$ and $B$, $A\ge B$ 
	in Loewner Partial Order if $A-B$ is positive definite.}) $H^1$-\rosso{global} covariance matrix \eqref{eq:sigma}, in the class of the linear unbiased estimators $\mathbf{B}(t)$ of $\bbeta (t)$.
\end{definition}

\rosso{From the definition of Loewner Partial Order, a $H^1$-BLUE } 
minimizes the quantity
\[
E\Big( \Big\langle \sum_{i=1}^p \alpha_i \big( {B}_{i}(t)-\beta_{i}(t) \big) , \sum_{i=1}^p
\alpha_i \big( {B}_{i}(t)-\beta_{i}(t) \big) \Big\rangle_{H^1}\Big)
\]
for any choice of $(\alpha_1, \ldots, \alpha_p)$, in the class of the linear unbiased estimators $\mathbf{B}(t)$ of $\bbeta (t)$.
In other words, the $H^1$-BLUE minimizes the $H^1$-\rosso{global} variance of any linear combination of its components. 
A stronger request is the following.
\begin{definition}
	We define the $H^1$-strong functional best linear unbiased estimator ($H^1$-{\color{red}S}BLUE) as the estimator with minimal \rosso{global}
	variance,
	\[
	E\Big( \Big\langle {\mathrm{O}}( \rosso{\mathbf{B}}(t)-\rosso{\bbeta}(t) ) , {{\mathrm{O}}} ( \rosso{\mathbf{B}}(t)-\rosso{\bbeta}(t) ) \Big\rangle_{H^1}\Big)
	\]
	for any choice of a (sufficiently regular) continuous  linear operator ${{\mathrm{O}}}:(H^1)^p \to H^1$,
	in the class of the linear unbiased estimators $\mathbf{B}(t)$ of $\bbeta (t)$.
\end{definition}

\subsection{\rosso{$H^1_{\mathbf{R}}$-representation on the Hilbert space $\mathbf{L}^2_{\mathbf{R}}$}}

Recall that, for any given $(i,j)$, the couple $({\alpha}_i^{(f)}(t)+ {\varepsilon}_{ij}^{(f)}(t),{\alpha}_i^{(d)}(t)+ {\varepsilon}_{ij}^{(d)}(t))$
is a process with values in $\mathbf{L}^2 = L^2(\tau) \times L^2(\tau)$.
Let $\mathbf{R}(s,t) = \sum_k \lambda_k \boldsymbol\Psi_k(s) \boldsymbol\Psi_k(t)^T$ be the spectral representation of the covariance matrix 
of the process 
\begin{equation}\label{eq:E_i}
\mathbf{e}_i^T(t) = ({e}_i^{(f)}(t)\,,\,{e}_i^{(d)}(t)) = 
\frac{1}{r}\sum_{j=1}^r({\alpha}_i^{(f)}(t)+ {\varepsilon}_{ij}^{(f)}(t) \,,\, {\alpha}_i^{(d)}(t)+ {\varepsilon}_{ij}^{(d)}(t)),
\qquad i=1,\ldots,n
\end{equation}
which means $\lambda_k>0$, $\sum_k \lambda_k<\infty$ and the sequence $\{ \boldsymbol\Psi_k(t),k=1,2,\ldots\}$ are orthonormal bivariate vectors in $\mathbf{L}^2$. Without loss of generality 
assume that the $\mathbf{L}^2$-closure of the
linear span of $\{ \boldsymbol\Psi_k(t),k=1,2,\ldots\}$ includes $H^1$ (see Remark~\ref{rem:Psik}): 
$\overline{{\mathbf{L}^2} \cap \text{span} \{ \boldsymbol\Psi_k(t),k=1,2,\ldots\}} \supseteq H^1$.
Note that $\mathbf{R}(s,t)$, the covariance matrix 
of the process 
$\mathbf{e}_i(t) $,
does not depend on $i$. 
From Karhunen--Lo\`eve Theorem (see, e.g., \citet{Perrin13}), there exists an array of zero-mean unit variance random variables
$\{e_{i,k}; i=1,\ldots,n;k=1,2,\ldots\}$ such that
\begin{equation}
\mathbf{e}_i(t) = 
\sum_k \sqrt{\lambda_k} e_{i,k} \boldsymbol\Psi_k(t).
\label{eq:noise}
\end{equation}
The linearity of the covariance operator with respect to the first process, together with the symmetry in $j$ given 
in the hypothesis (\ref{hp:1}) and (\ref{hp:2}), ensures that
\begin{equation}\label{eq:corr_eE}
E \Big[ \big({\alpha}_{i}^{(f)}(s)+ {\varepsilon}_{ij}^{(f)}(s) \,,\, {\alpha}_{i}^{(d)}(s)+ {\varepsilon}_{ij}^{(d)}(s)\big)^T 
\cdot \mathbf{e}^T_i(t) \Big]
= \mathbf{R}(s,t) = \sum_k \lambda_k \boldsymbol\Psi_k(s) \boldsymbol\Psi_k(t)^T.
\end{equation}
Now, for $i=1,\ldots,n;j=1,\ldots,r;k=1,2,\ldots$, let 
$$
X_{ij,k} = \Big\langle \boldsymbol\Psi_{k} , 
\big({\alpha}_{i}^{(f)}+ {\varepsilon}_{ij}^{(f)} \,,\, {\alpha}_{i}^{(d)}+ {\varepsilon}_{ij}^{(d)}\big)^T \Big\rangle_{\mathbf{L}^2},
$$
and hence
\[
\big({\alpha}_{i}^{(f)}(s)+ {\varepsilon}_{ij}^{(f)}(s) \,,\, {\alpha}_{i}^{(d)}(s)+ {\varepsilon}_{ij}^{(d)}(s)\big)^T= 
\sum_k X_{ij,k} \boldsymbol\Psi_k(s), \qquad \frac{1}{r} \sum_{j=1}^r X_{ij,k} = \sqrt{\lambda_k}e_{i,k}.
\]
The independence assumptions in the hypothesis (\ref{hp:1}) and (\ref{hp:2}) ensures that the joint law of
the processes $({\alpha}_{i_1}^{(f)}+ {\varepsilon}_{i_1j}^{(f)} \,,\,{\alpha}_{i_1}^{(d)}+ {\varepsilon}_{i_1j}^{(d)})$
and $\mathbf{e}_{i_2}$ does not depend on $j$, hence
\[
E (X_{i_11,k_1}   \sqrt{\lambda_{k_2}}e_{i_2,k_2} ) = 
E (X_{i_12,k_1}   \sqrt{\lambda_{k_2}}e_{i_2,k_2} ) = \cdots = E (X_{i_1r,k_1}   \sqrt{\lambda_{k_2}}e_{i_2,k_2} ). 
\]
From \eqref{eq:corr_eE}, the linearity of the expectation ensures that
\begin{equation}\label{eq:rGMprod}
\delta_{i_1}^{i_2} \delta_{k_1}^{k_2} \lambda_{k_1}  = 
E ( \sqrt{\lambda_{k_1}}e_{i_1,k_1}  \sqrt{\lambda_{k_2}}e_{i_2,k_2} ) = \sqrt{\lambda_{k_2}}
E (X_{i_1j,k_1}   e_{i_2,k_2} ), \qquad j=1,\ldots,r.
\end{equation}
Let us observe that the elements of $\overline{{\mathbf{L}^2} \cap \text{span} \{ \boldsymbol\Psi_k(t),k=1,2,\ldots\}}$ are the functions $\mathbf{a}$ such that
$\mathbf{a}=\sum_k  \langle \mathbf{a} , \boldsymbol\Psi_{k}\rangle_{\mathbf{L}^2} \cdot \boldsymbol\Psi_{k}$ and
$\| \mathbf{a} \|_{\mathbf{L}^2} ^2 = \sum_k  \langle \mathbf{a} , \boldsymbol\Psi_{k}\rangle_{\mathbf{L}^2}  ^2< \infty$. In the following definition a stronger condition is required.
\begin{definition}
Given the spectral representation 
of $\mathbf{R}(s,t)$, let
\begin{equation}\label{eq:L2Omega}
\mathbf{L}^2_{\mathbf{R}} = 
\Big\{ \mathbf{a} \in \overline{{\mathbf{L}^2} \cap \text{span} \{ \boldsymbol\Psi_k(t),k=1,2,\ldots\}}
\colon \sum_k 
\frac{ \langle \mathbf{a} , \boldsymbol\Psi_{k}\rangle_{\mathbf{L}^2}  ^2 }{\lambda_k} < \infty \Big\}
\end{equation}
be a new Hilbert space,
with inner product
\begin{equation}\label{eq:innnerOmega}
\langle \mathbf{a} , \mathbf{b} \rangle_{\mathbf{L}^2_{\mathbf{R}}} = \sum_k 
\frac{ \langle \mathbf{a} , \boldsymbol\Psi_{k}\rangle_{\mathbf{L}^2}  \langle 
\mathbf{b} , \boldsymbol\Psi_{k} \rangle_{\mathbf{L}^2} }{\lambda_k}.
\end{equation}
\end{definition}
Note that \(\| \cdot \|_{\mathbf{L}^2} \leq \tfrac{1}{\max(\lambda_k)}\| \cdot \|_{\mathbf{L}^2_{\mathbf{R}}}\).
An orthonormal base for $\mathbf{L}^2_{\mathbf{R}}$ is given by $(\boldsymbol\Phi_{k})_k$, where 
$\boldsymbol\Phi_{k} = \sqrt{\lambda_k}\boldsymbol\Psi_{k}$ for any $k$.

Consider now the following linear closed dense subset of $\mathbf{L}^2_{\mathbf{R}} $:
\[
K = \Big\{ \mathbf{b} \in {\mathbf{L}^2_{\mathbf{R}}} \colon \sum_k 
\frac{ \langle \boldsymbol\Psi_{k},\mathbf{b} \rangle_{\mathbf{L}^2}^2 }{\lambda_k^2} < \infty \Big\}.
\]
Observe that $\boldsymbol\Psi_{k} \in K$ for all $k$. If $K^*$ is the ${\mathbf{L}^2_{\mathbf{R}}}$-dual space of $K$,
the Gelfand triple $K \subset \mathbf{L}^2_{\mathbf{R}} \subset K^*$ 
implies that $\overline{{\mathbf{L}^2} \cap \text{span} \{ \boldsymbol\Psi_k(t),k=1,2,\ldots\}} \subseteq K^*$.

In analogy with the geometric interpretation of the Riesz representation, we construct the \emph{$H^1_{\mathbf{R}}$-representation} 
in the following way. For any element $\mathbf{b} \in \mathbf{L}^2_{\mathbf{R}}$, we call \emph{$H^1_{\mathbf{R}}$-representative} its $\mathbf{L}^2_{\mathbf{R}}$-projection on $H^1$, and we denote it with the symbol $b^{(\mathbf{R})}$.
In particular, for any $k$, let ${\psi^{(\mathbf{R})}_k}(t)$ be the $H^1_{\mathbf{R}}$-representative of $\boldsymbol\Psi_{k} $, that is, the unique element in $H^1\cap \mathbf{L}^2_{\mathbf{R}}$ such that
\[
\langle ({\psi^{(\mathbf{R})}_k},{\psi^{(\mathbf{R})}_k}')^T , (g,g')^T 
\rangle_{\mathbf{L}^2_{\mathbf{R}}} = 
\langle \boldsymbol\Psi_{k} , (g,g')^T \rangle_{\mathbf{L}^2_{\mathbf{R}}} = 
\frac{\langle \boldsymbol\Psi_{k} , (g,g')^T \rangle_{\mathbf{L}^2}}{\lambda_k} , \qquad \forall g \in H^1\cap \mathbf{L}^2_{\mathbf{R}}.
\]
Note that the $H^1_{\mathbf{R}}$-representatives of the orthonormal system $(\boldsymbol\Phi_{k})_k$ of $\mathbf{L}^2_{\mathbf{R}}$ are given by 
${\phi^{(\mathbf{R})}_k}(t) =\sqrt{\lambda_k}{\psi^{(\mathbf{R})}_k}(t)$, where, by definition of projection,
\begin{equation}\label{eq:projH1Rbase}
\| {\phi^{(\mathbf{R})}_k}(t)\|_{\mathbf{H}^1_{\mathbf{R}}} =
\| ({\phi^{(\mathbf{R})}_k}(t),{\phi^{(\mathbf{R})}_k}'(t))^T \|_{\mathbf{L}^2_{\mathbf{R}}} \leq 
\| \boldsymbol{\Phi}_k(t) \|_{\mathbf{L}^2_{\mathbf{R}}} = 1.
\end{equation}
Moreover,
\begin{equation}\label{eq:tricks}
\langle ({\phi^{(\mathbf{R})}_h},{\phi^{(\mathbf{R})}_h}')^T, \boldsymbol\Phi_{k}\rangle_{\mathbf{L}^2_{\mathbf{R}}}
=
\langle ({\phi^{(\mathbf{R})}_h},{\phi^{(\mathbf{R})}_h}')^T , ({\phi^{(\mathbf{R})}_k},{\phi^{(\mathbf{R})}_k}') ^T\rangle_{\mathbf{L}^2_{\mathbf{R}}} 
=
\langle \boldsymbol\Phi_{h}, ({\phi^{(\mathbf{R})}_k},{\phi^{(\mathbf{R})}_k}') ^T\rangle_{\mathbf{L}^2_{\mathbf{R}}} ,
\end{equation}
and 
the $H^1_{\mathbf{R}}$-representation of any $\mathbf{b} \in \mathbf{L}^2_{\mathbf{R}}$ can be written as 
\begin{equation}\label{eq:H1RRreprL2R}
{b}^{(\mathbf{R})} = \sum_h \langle \mathbf{b} , \boldsymbol\Psi_{h}\rangle_{\mathbf{L}^2} {\psi^{(\mathbf{R})}_h}
= \sum_h \langle \mathbf{b} , \boldsymbol\Phi_{h}\rangle_{\mathbf{L}^2_{\mathbf{R}}} {\phi^{(\mathbf{R})}_h}.
\end{equation}
When $\mathbf{a} \in \overline{{\mathbf{L}^2} \cap \text{span} \{ \boldsymbol\Psi_k(t),k=1,2,\ldots\}}$, 
it is again possible to define formally its $H^1_{\mathbf{R}}$-representation 
in the following way:
\begin{equation}\label{eq:H1RRreprK}
{a}^{(\mathbf{R})}(t) = \sum_k \langle \mathbf{a} , \boldsymbol\Psi_{k}\rangle_{\mathbf{L}^2} {\psi^{(\mathbf{R})}_k}(t).
\end{equation}
In this case, if ${a}^{(\mathbf{R})}\in H^1$, an analogous of the standard projection can be obtained:
$ ({a}^{(\mathbf{R})},{{a}^{(\mathbf{R})}}')$ it is the unique element in $K^*$ of the form
$(a,a')$ with $a\in H^1$ such that
\[
\langle \mathbf{a}, (h,h')^T\rangle_{\mathbf{L}^2_{\mathbf{R}}} 
= \langle (a,a')^T , (h,h')^T\rangle_{\mathbf{L}^2_{\mathbf{R}}} , \qquad \forall (h,h')\in K.
\]
It will be useful to observe that, as a consequence, 
when $\mathbf{a}=(\mathbf{f}(\mathbf{x}_i)^T \boldsymbol{\beta} ,\mathbf{f}(\mathbf{x}_i)^T \boldsymbol{\beta} ')$, 
then its $H^1_{\mathbf{R}}$-representative is $\mathbf{f}(\mathbf{x}_i)^T \boldsymbol{\beta}$.
\begin{lemma}\label{lem:eRinH1}
Given $\mathbf{e}_i$ as in \eqref{eq:E_i},
its $H^1_{\mathbf{R}}$-representative 
\[
e_i^{(\mathbf{R})} = \sum_{k} \sqrt{\lambda_k}e_{i,k} {\psi^{(\mathbf{R})}_k} ,
\] 
belongs to $L^2(\Omega;{H^1})$, for any
$i=1,\ldots,n$.
\end{lemma}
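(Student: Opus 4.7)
The plan is to reduce the claim $e_i^{(\mathbf{R})}\in L^2(\Omega;H^1)$ to the finite second-moment bound $E\|e_i^{(\mathbf{R})}\|_{H^1}^2<\infty$, and then to establish that bound by a Hilbert-Schmidt estimate on the underlying projection operator. I would first verify that the Karhunen-Lo\`eve coefficients $\{e_{i,k}\}_k$ appearing in \eqref{eq:noise} are in fact pairwise uncorrelated, $E[e_{i,k}e_{i,k'}]=\delta_k^{k'}$. This follows from $\sqrt{\lambda_k}e_{i,k}=\langle \mathbf{e}_i,\boldsymbol{\Psi}_k\rangle_{\mathbf{L}^2}$ together with the spectral form $\mathbf{R}(s,t)=\sum_l\lambda_l\boldsymbol{\Psi}_l(s)\boldsymbol{\Psi}_l(t)^T$ of the covariance. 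Using this orthogonality together with the identity $\phi^{(\mathbf{R})}_k=\sqrt{\lambda_k}\,\psi^{(\mathbf{R})}_k$, a direct expansion of the square inside the expectation yields
\[
E\|e_i^{(\mathbf{R})}\|_{H^1}^2=\sum_k\lambda_k\,\|\psi^{(\mathbf{R})}_k\|_{H^1}^2=\sum_k\|\phi^{(\mathbf{R})}_k\|_{H^1}^2,
\]
so the lemma reduces to proving the summability $\sum_k\|\phi^{(\mathbf{R})}_k\|_{H^1}^2<\infty$.

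This summability is the main obstacle, and I would address it through a Hilbert-Schmidt argument. Consider the linear operator $T\colon \mathbf{L}^2_{\mathbf{R}}\to \mathbf{L}^2$ defined by $T(\mathbf{a})=(a^{(\mathbf{R})},(a^{(\mathbf{R})})')$. By the construction of the $H^1_{\mathbf{R}}$-representation as the $\mathbf{L}^2_{\mathbf{R}}$-projection onto the immersion of $H^1$, $T$ factors as $T=\iota\circ P$, where $P\colon \mathbf{L}^2_{\mathbf{R}}\to \mathbf{L}^2_{\mathbf{R}}$ is this orthogonal projection (hence a contraction, $\|P\|_{op}=1$) and $\iota\colon \mathbf{L}^2_{\mathbf{R}}\hookrightarrow \mathbf{L}^2$ is the canonical inclusion. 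Since $(\boldsymbol{\Phi}_k)_k$ is an orthonormal basis of $\mathbf{L}^2_{\mathbf{R}}$ with $\|\boldsymbol{\Phi}_k\|_{\mathbf{L}^2}^2=\lambda_k$, the inclusion $\iota$ is Hilbert-Schmidt with $\|\iota\|_{HS}^2=\sum_k\lambda_k<\infty$. The standard estimate $\|\iota\circ P\|_{HS}\leq\|\iota\|_{HS}\|P\|_{op}$ then shows that $T$ is Hilbert-Schmidt, and computing $\|T\|_{HS}^2$ in the same basis gives
\[
\sum_k\|\phi^{(\mathbf{R})}_k\|_{H^1}^2=\sum_k\|T\boldsymbol{\Phi}_k\|_{\mathbf{L}^2}^2=\|T\|_{HS}^2\leq\sum_k\lambda_k<\infty,
\]
which is exactly the bound needed above.

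Finally, to be fully rigorous I would verify that the series $\sum_k\sqrt{\lambda_k}\,e_{i,k}\,\psi^{(\mathbf{R})}_k$ is Cauchy in $L^2(\Omega;H^1)$, using that the tail norms $\sum_{k>N}\|\phi^{(\mathbf{R})}_k\|_{H^1}^2$ vanish as $N\to\infty$ by the summability just proved. The limit coincides with the $e_i^{(\mathbf{R})}$ displayed in the statement and belongs to $L^2(\Omega;H^1)$, concluding the proof. The delicate point throughout is the correct identification of $T$ as the composition of a bounded projection with a Hilbert-Schmidt inclusion, which is what allows the trace-class assumption $\sum_k\lambda_k<\infty$ on the covariance to translate into a uniform $H^1$-summability for the representatives $\phi^{(\mathbf{R})}_k$.
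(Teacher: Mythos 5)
Your proof is correct, and its skeleton matches the paper's: reduce the claim to the second-moment bound $E\|e_i^{(\mathbf{R})}\|_{H^1}^2<\infty$, use the uncorrelatedness $E[e_{i,k}e_{i,k'}]=\delta_k^{k'}$ of the Karhunen--Lo\`eve coefficients (recorded in the paper as \eqref{eq:rGMprod}) to reduce to the deterministic summability $\sum_k\lambda_k\|\psi^{(\mathbf{R})}_k\|^2_{H^1}=\sum_k\|\phi^{(\mathbf{R})}_k\|^2_{H^1}<\infty$, and bound that sum by the trace $\sum_k\lambda_k$. Where you genuinely differ is in how this last bound is obtained. The paper does it by hand: it expands everything over the basis $(\boldsymbol\Phi_h)_h$ of $\mathbf{L}^2_{\mathbf{R}}$, swaps the indices $h$ and $k$ via the symmetry identity \eqref{eq:tricks} (which is the self-adjointness of the projection), and then applies the termwise contraction bound \eqref{eq:projH1Rbase}. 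You package the same computation as the operator inequality $\|\iota\circ P\|_{HS}\le\|\iota\|_{HS}\,\|P\|_{op}$ with $\|\iota\|_{HS}^2=\sum_k\|\boldsymbol\Phi_k\|^2_{\mathbf{L}^2}=\sum_k\lambda_k$; since the standard proof of that inequality passes to adjoints, it is secretly the paper's index swap. Your version is more modular --- it isolates the two facts that matter ($P$ is a contraction on $\mathbf{L}^2_{\mathbf{R}}$, the inclusion $\mathbf{L}^2_{\mathbf{R}}\hookrightarrow\mathbf{L}^2$ is Hilbert--Schmidt exactly because the covariance is trace class) and would transfer unchanged to other Sobolev targets --- while the paper's is elementary and self-contained. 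Your closing step, checking that the partial sums are Cauchy in $L^2(\Omega;H^1)$ before identifying the limit with $e_i^{(\mathbf{R})}$, is left implicit in the paper and is the logically correct way to order the argument.
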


The following theorem is a direct consequence of the previous results.
\begin{theorem}\label{cor:yR-eR}
The following equation holds in  $L^2(\Omega;{H^1})$:
\[
\bar{y}_i^{(\mathbf{R})}(t)(\omega) = \mathbf{f}(\mathbf{x}_i)^T \boldsymbol{\beta} (t) +
e_i^{(\mathbf{R})} (t)(\omega)
\qquad i=1,\ldots,n,
\]
where each 
\(
\bar{y}_i^{(\mathbf{R})} 
\) 
is the $H^1_{\mathbf{R}}$-representation 
of the mean $(\bar{\mathbf{y}}^{(f)}_i(t) ,\bar{\mathbf{y}}^{(d)}_i(t) )$ of the observations given in \eqref{eq:meanObs}.
As a consequence, $\bar{y}_i^{(\mathbf{R})}(t)$ belongs to $L^2(\Omega;{H^1})$, and hence 
$\bar{y}_i^{(\mathbf{R})}(\omega)\in H^1$ a.s.
\end{theorem}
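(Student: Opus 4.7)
The plan is to decompose the averaged observation according to model \eqref{eq:rMeas} and then apply the $H^1_{\mathbf{R}}$-representation operator term by term, invoking results already established. Concretely, averaging the system \eqref{eq:rMeas} over $j=1,\ldots,r$ gives
\[
\bigl(\bar{y}_i^{(f)}(t),\, \bar{y}_i^{(d)}(t)\bigr)
= \bigl(\mathbf{f}(\mathbf{x}_i)^T \bbeta(t),\, \mathbf{f}(\mathbf{x}_i)^T \bbeta'(t)\bigr) + \mathbf{e}_i(t),
\]
where $\mathbf{e}_i$ is the noise process defined in \eqref{eq:E_i}.

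Next, since \eqref{eq:H1RRreprK} defines the $H^1_{\mathbf{R}}$-representative as a linear image of its argument, I would apply it to each summand separately. The observation just before Lemma \ref{lem:eRinH1} handles the deterministic part: the $H^1_{\mathbf{R}}$-representative of $(\mathbf{f}(\mathbf{x}_i)^T \bbeta,\, \mathbf{f}(\mathbf{x}_i)^T \bbeta')$ is precisely $\mathbf{f}(\mathbf{x}_i)^T \bbeta(t)$, which lies in $H^1$ since $\bbeta(t)\in (H^1)^p$. For the random part, Lemma \ref{lem:eRinH1} yields $e_i^{(\mathbf{R})} = \sum_k \sqrt{\lambda_k}\, e_{i,k}\, \psi_k^{(\mathbf{R})}\in L^2(\Omega;H^1)$. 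Summing the two pieces produces the identity in the statement and at the same time confirms $\bar{y}_i^{(\mathbf{R})}\in L^2(\Omega;H^1)$, which forces $\bar{y}_i^{(\mathbf{R})}(\omega)\in H^1$ almost surely.

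The only delicate point is justifying that the (pathwise) $H^1_{\mathbf{R}}$-representation of the random noise is consistent with the formal series expansion \eqref{eq:H1RRreprK}, so that the overall linear decomposition really holds in $L^2(\Omega;H^1)$ rather than only in some weaker dual sense. This is exactly what Lemma \ref{lem:eRinH1} furnishes: both the explicit convergent series for $e_i^{(\mathbf{R})}$ and its $L^2(\Omega;H^1)$ integrability. Once this is in hand, the theorem reduces to a short combination of the deterministic representative with the stochastic one, followed by the standard implication that $L^2(\Omega;H^1)$-membership entails almost-sure membership in $H^1$.
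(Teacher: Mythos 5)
Your argument is correct and coincides with the paper's own (implicit) proof: the paper gives no separate proof of this theorem, stating only that it is ``a direct consequence of the previous results,'' and the chain you spell out --- averaging model \eqref{eq:rMeas} over $j$, applying the linear $H^1_{\mathbf{R}}$-representation to the deterministic part (whose representative is $\mathbf{f}(\mathbf{x}_i)^T\bbeta(t)$ by the observation preceding Lemma~\ref{lem:eRinH1}) and to the noise $\mathbf{e}_i$ via Lemma~\ref{lem:eRinH1} --- is exactly that consequence. Nothing further is needed.
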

We define 
\begin{equation}\label{defn:DomR}
\mathcal{R} = \{\mathbf{y}\in
\big(\overline{{\mathbf{L}^2} \cap \text{span} \{ \boldsymbol\Psi_k(t),k=1,2,\ldots\}}\big)^{n r} 
\colon y_i^{(\mathbf{R})} \in H^1,  i=1,\ldots,n\} .
\end{equation}

The vector $\bar{\mathbf{y}}^{(\mathbf{R})}(t) = \big(\bar{y}_1^{(\mathbf{R})} , \bar{y}_2^{(\mathbf{R})} , 
\ldots , \bar{y}_n^{(\mathbf{R})}\big)^T$
plays the r\^ole of the Riesz representative of Theorem~\ref{thm:Riesz_representation} in the 
following {\color{red}S}BLUE theorem.
\begin{theorem}\label{GM-Repeated}
	The functional estimator 
\begin{equation}\label{eq:HbbetaR}
{\HbbetaR} (t) = (F^TF)^{-1} F^T \bar{\mathbf{y}}^{(\mathbf{R})}(t),
\end{equation}
	for the model \eqref{eq:rMeas} is a $H^1$-functional
	{\color{red}S}%
	BLUE.	
\end{theorem}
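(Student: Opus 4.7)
The plan is to lift the classical Gauss--Markov argument to this Hilbert-space setting, using Theorem~\ref{cor:yR-eR} to reduce the problem to a structure that mirrors the finite-dimensional iid case. First I would establish unbiasedness and an explicit residual formula: by Theorem~\ref{cor:yR-eR}, $\bar{y}_i^{(\mathbf{R})} = \mathbf{f}(\mathbf{x}_i)^T \bbeta + e_i^{(\mathbf{R})}$ in $L^2(\Omega; H^1)$, so substituting into \eqref{eq:HbbetaR} gives
\[
\HbbetaR - \bbeta = (F^TF)^{-1} F^T \mathbf{e}^{(\mathbf{R})},
\]
where $\mathbf{e}^{(\mathbf{R})} = (e_1^{(\mathbf{R})}, \ldots, e_n^{(\mathbf{R})})^T$ has zero mean by linearity of the $H^1_{\mathbf{R}}$-projection and the zero-mean hypotheses on the $\boldsymbol\alpha_i^{(\cdot)}$ and $\boldsymbol\varepsilon_{ij}^{(\cdot)}$.

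Next, for any competing linear unbiased estimator $\mathbf{B}(t) = \mathbf{C}(\mathbf{y}^{(f)}, \mathbf{y}^{(d)})$, I would set $\mathbf{D} = \mathbf{B} - \HbbetaR$. Simultaneous unbiasedness of $\mathbf{B}$ and $\HbbetaR$ for every $\bbeta$ forces the deterministic-signal parts to cancel, so $\mathbf{D}$ depends only on the noise components $(\boldsymbol\alpha_i^{(f)} + \boldsymbol\varepsilon_{ij}^{(f)},\,\boldsymbol\alpha_i^{(d)} + \boldsymbol\varepsilon_{ij}^{(d)})$ and has zero mean. In the infinite-dimensional setting this requires some care: one has to check that the ``deterministic part of $\mathbf{C}$'' is well defined on the injection $H^1 \hookrightarrow \mathbf{L}^2$ and that subtracting it leaves a closed linear operator on the noise alone with $\mathbf{D} \in L^2(\Omega; (H^1)^p)$.

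The heart of the proof is the orthogonality identity
\[
E \langle \mathrm{O}(\mathbf{D}), \mathrm{O}(\HbbetaR - \bbeta) \rangle_{H^1} = 0 \qquad \text{for every continuous linear } \mathrm{O}: (H^1)^p \to H^1.
\]
To obtain it, I would expand both residuals along the Karhunen--Lo\`eve basis $(\boldsymbol\Psi_k)_k$: the second residual becomes $\sum_{i,k} \sqrt{\lambda_k}\, e_{i,k}\, [(F^TF)^{-1}\mathbf{f}(\mathbf{x}_i)]\,\psi_k^{(\mathbf{R})}(t)$, while $\mathbf{D}$ admits an analogous expansion whose coefficients encode the linear functional defining the noise-part of $\mathbf{C}$. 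Using the uncorrelatedness $E[e_{i_1,k_1} e_{i_2,k_2}] = \delta_{i_1}^{i_2}\delta_{k_1}^{k_2}$ from \eqref{eq:rGMprod} and the variational characterization of $\psi_k^{(\mathbf{R})}$ as the $\mathbf{L}^2_{\mathbf{R}}$-projection of $\boldsymbol\Psi_k$ onto $H^1$, the zero-mean condition on $\mathbf{D}$ translates exactly into the vanishing of each term of the cross-covariance series. With the orthogonality in hand,
\[
E \|\mathrm{O}(\mathbf{B} - \bbeta)\|_{H^1}^2 = E\|\mathrm{O}(\mathbf{D})\|_{H^1}^2 + E\|\mathrm{O}(\HbbetaR - \bbeta)\|_{H^1}^2 \geq E\|\mathrm{O}(\HbbetaR - \bbeta)\|_{H^1}^2,
\]
which is precisely the $H^1$-SBLUE property.

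The main obstacle will be the rigorous justification of this orthogonality for an arbitrary closed linear $\mathbf{C}$ on the dense domain $\mathcal{R}\subset (\mathbf{L}^2)^{nr}$: one must expand $\mathbf{D}$ consistently through the Gelfand triple $K \subset \mathbf{L}^2_{\mathbf{R}} \subset K^*$, justify the exchange of expectation with the (possibly divergent in $\mathbf{L}^2$) Karhunen--Lo\`eve series, and verify that the projection characterization of the $\psi_k^{(\mathbf{R})}$ is what makes the cross terms collapse. Once this technical bookkeeping is carried out, the rest of the argument is a direct translation of the classical proof, with the $H^1_{\mathbf{R}}$-representation playing the role that orthogonal projection onto the column space of $F$ plays in the finite-dimensional Gauss--Markov theorem.
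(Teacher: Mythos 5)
Your proposal is correct and follows essentially the same route as the paper's own proof: decompose an arbitrary linear unbiased estimator as $\HbbetaR(t)+\mathbf{D}$, use unbiasedness for every $\bbeta$ to show $\mathbf{D}$ annihilates the deterministic part and acts only on the noise, and then kill the cross term by expanding along the Karhunen--Lo\`eve basis, invoking \eqref{eq:rGMprod} and the projection characterization of the $\psi^{(\mathbf{R})}_k$. The only point to make fully explicit when writing this up is that the relation needed is $E(X_{i_1j,k_1}e_{i_2,k_2})$ (coefficients of the individual observations against the averaged noise), and that the vanishing of each cross-covariance term comes from applying the deterministic-cancellation identity to the specific inputs $h\,\mathbf{m}_{l_2}\otimes\mathbf{1}_r$ built from the rows of $(F^TF)^{-1}F^T$ --- exactly the paper's third step.
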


\begin{remark}
\label{R2}
	From the proof of Theorem \ref{GM-Repeated} (see \ref{appendix}) we have that $\HbbetaR(t)$ is the best estimator among all the estimators $\mathbf{B}(t)=\mathbf{C}({\mathbf{y}}^{(f)} (t),{\mathbf{y}}^{(d)} (t))$ where
	$\mathbf{C}: {\mathcal{R}}\to (H^1)^p$ is any linear closed unbiased operator.
	Therefore, $\HbbetaR(t)$ is also better than the best linear unbiased estimators  based only on ${\mathbf{y}}^{(f)} (t)$ or ${\mathbf{y}}^{(d)} (t)$, since they are defined by some linear unbiased operator. 
	
\end{remark}

\begin{remark}\label{rem:Psik}
The assumption $\overline{{\mathbf{L}^2} \cap \text{span} \{ \boldsymbol\Psi_k(t),k=1,2,\ldots\}} \supseteq H^1$ 
ensures that the each component 
of the unknown $\bbeta (t) $ is in $\text{span} \{ \boldsymbol\Psi_k(t),k=1,2,\ldots\}$.
As a consequence, we have noted that the $H^1_{\mathbf{R}}$-representative of
$(\mathbf{f}(\mathbf{x}_i)^T \boldsymbol{\beta} ,\mathbf{f}(\mathbf{x}_i)^T \boldsymbol{\beta} ')$, 
is $\mathbf{f}(\mathbf{x}_i)^T \boldsymbol{\beta}$. 
If this assumption is not true, it may happen that $\beta_l \not\in \text{span} \{ \boldsymbol\Psi_k(t),k=1,2,\ldots\}$ for some $l=1,\ldots, p$, 
and then $\beta_l$ would have a nonzero projection on the orthogonal complement of 
$\text{span} \{ \boldsymbol\Psi_k(t),k=1,2,\ldots\} $. 
Since on the orthogonal complement we do not observe any noise,
this means that we would have a deterministic subproblem, that, without loss of generality, we can
ignore.
\end{remark}

\section{\rosso{Simulations}}\label{sect:simul}
In this section, it is explored, throughout a simulation study, when it is more convenient to use the whole information on both reconstructed functions and derivatives  with respect to the partial use of $y^{(f)}(t)$ (or $y^{(d)}(t)$). 
The idea is that using the whole information on curves and derivatives is much more convenient as 
the  dependence between $y^{(f)}(t)$ and $y^{(d)}(t)$ is smaller and their  spread is more comparable. 

\begin{figure}
\fbox{\parbox{.99\textwidth}{\center{\footnotesize{\textbf{Functions}}}\\ \includegraphics[width=.98\textwidth]{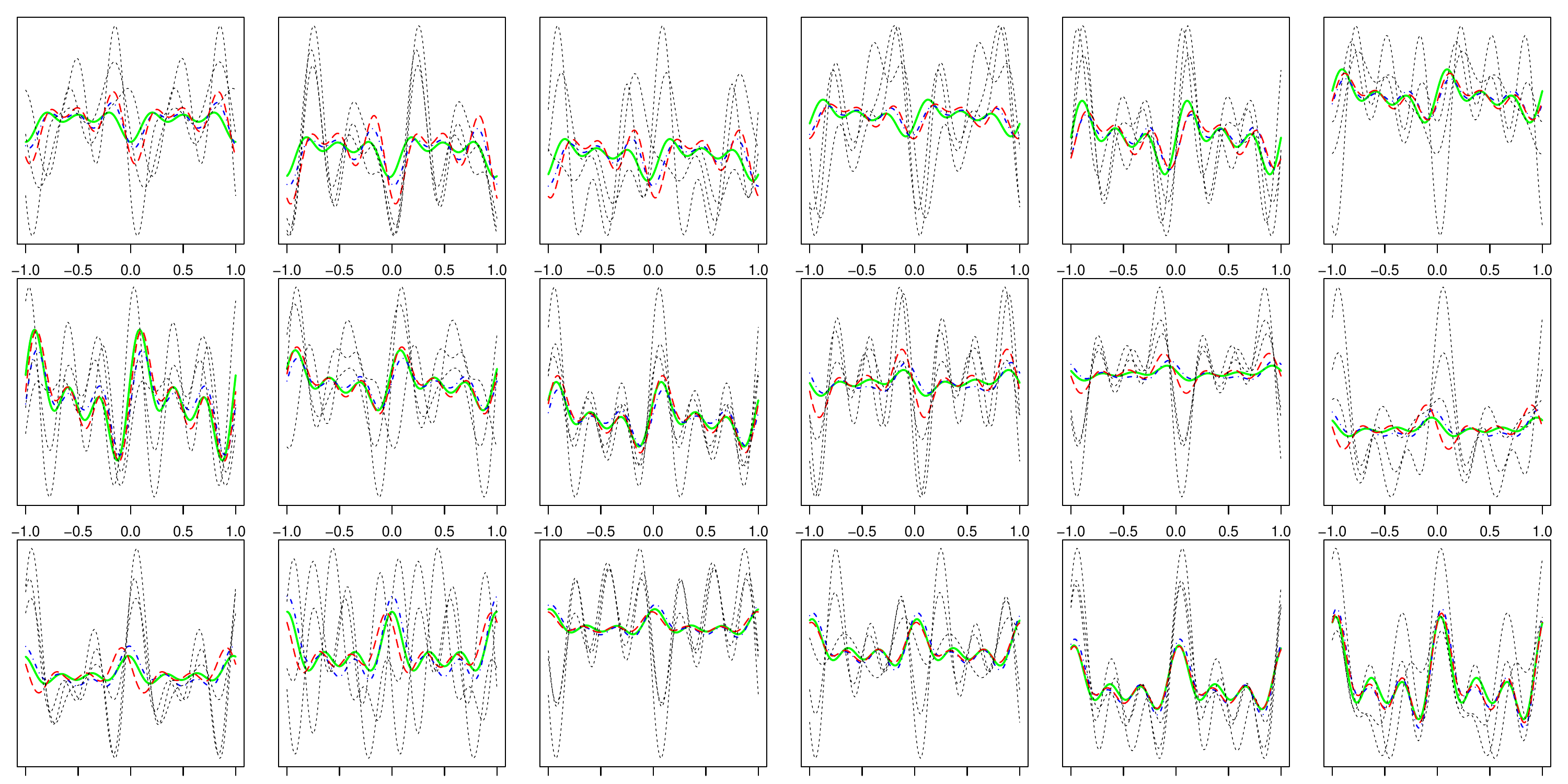}}}

\vspace{0.4cm}

\fbox{\parbox{.99\textwidth}{\center{\footnotesize{\textbf{Derivatives}}}\\ \includegraphics[width=.98\textwidth]{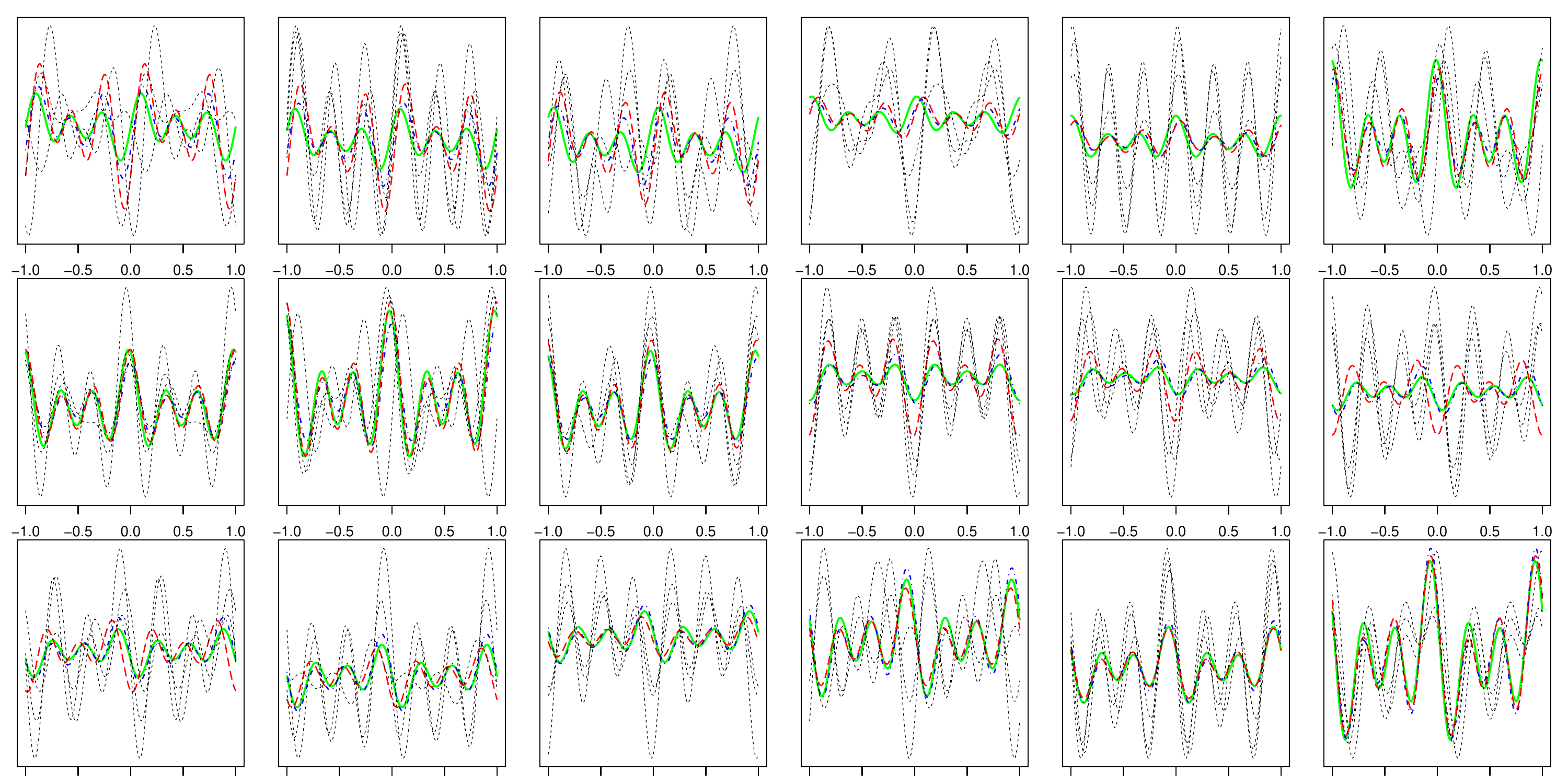}}}
\caption{Simulated data from model \eqref{eq:rMeas} and predicted curves. 
Black lines: simulated data of curves (top panel) and derivatives (bottom panel).
In each $i$-th box ($i=1,\ldots,18$) the $j=1, \ldots , 3$ replications are plotted.
Blue lines: predictions based on {\color{red}S}%
BLUE estimator.
Red lines: 
predictions based on OLS estimator.
Green lines: theoretical curves $\mathbf{f}(\mathbf{x}_i)^T \boldsymbol{\beta}(t)$ in top panel 
and $\mathbf{f}(\mathbf{x}_i)^T \boldsymbol{\beta}'(t)$ in bottom panel.}\label{fig:datasets}
\end{figure}

In this study, for each scenario listed below, $1000$ datasets are simulated from model \eqref{eq:rMeas} by a Montecarlo method, 
with $n=18$, $r=3$, $p=3$, 
\[
\bbeta (t) = 
\begin{pmatrix}
\sin (\pi t) + \sin (2 \pi t) + \sin (4 \pi t) \\
- \sin (\pi t) + \cos (\pi t) - \sin (2 \pi t) + \cos (2 \pi t)  - \sin (4 \pi t) + \cos (4 \pi t)  \\
+ \sin (\pi t) + \cos (\pi t) + \sin (2 \pi t) + \cos (2 \pi t) + \sin (4 \pi t) + \cos (4 \pi t) 
\end{pmatrix}, \qquad t \in (-1,1),
\]
and 
\[
F^T = \bigg[
\begin{smallmatrix}
 1 & 1 & 1 & 1 & 1 & 1 & 1 & 1 & 1 & 1 & 1 & 1 & 1 & 1 & 1 & 1 & 1 & 1 \\[1mm]
  0 & 0 & 0 & 0 & 0 & 0 & 0 & 0 & 0 & 1 & 1 & 1 & 1 & 1 & 1 & 1 & 1 & 1 \\[1mm]
 -1.00 & -0.75 & -0.50 & -0.25 & 0.00 & 0.25 & 0.50 & 0.75 & 1.00 & -1.00 & -0.75 & -0.50 & -0.25 & 0.00 & 0.25 & 0.50 & 0.75 & 1.00 
\end{smallmatrix}
\bigg].
\]
In what follows, we compare the following different estimators: the {\color{red}S}%
BLUE ${\HbbetaR} (t)$ (see Section~\ref{sec:SBLUE}), 
the OLS estimators 
${\Hbbeta}_\lambda (t) $ (see Remark~\ref{rem:OLSlambda}), and 
$\hat\bbeta^{(c)}_\lambda(t)=\lambda \hat\bbeta^{(f)}(t)+(1-\lambda)\hat\bbeta^{(d)}(t)$, where
$\hat\bbeta^{(f)}(t)$  is the OLS estimator based on $\mathbf{y}^{(f)}(t)$ and $\hat\bbeta^{(d)}(t)$ is 
the OLS estimator based on $\mathbf{y}^{(d)}(t)$, with $0\leq\lambda\leq 1$. 

Let us note that $\hat\bbeta^{(c)}_\lambda(t)$ is a  compound OLS estimator; it is a
rough way of taking into account both the sources of information on $\mathbf{y}^{(f)}(t)$ and $\mathbf{y}^{(d)}(t)$. 
Of course, setting $\lambda=0$ we ignore completely the information on the functions and $\hat\bbeta^{(c)}_0(t)=\hat\bbeta^{(d)}(t)= {\Hbbeta}_0(t)$, viceversa setting $\lambda=1$ means to ignore the information on the derivatives and thus $\hat\bbeta^{(c)}_1(t)=\hat\bbeta^{(f)}(t)= {\Hbbeta}_1 (t)$.

All the computations are developed using R package.

In Figure~\ref{fig:datasets} it is plotted: one dataset of curves and derivatives (black lines); 
the  regression functions
$\mathbf{f}(\mathbf{x}_i)^T \boldsymbol{\beta}(t)$ and $\mathbf{f}(\mathbf{x}_i)^T \boldsymbol{\beta}'(t)$ 
(green lines); the {\color{red}S}%
BLUE predictions
$\mathbf{f}(\mathbf{x}_i)^T {\HbbetaR} (t)$ and $\mathbf{f}(\mathbf{x}_i)^T {{\HbbetaR}}{}' (t)$  (blue lines);
the OLS predictions
$\mathbf{f}(\mathbf{x}_i)^T {\Hbbeta}_{\frac{1}{2}} (t) $ and $\mathbf{f}(\mathbf{x}_i)^T {{\Hbbeta}_{\frac{1}{2}} }{}' (t)$ 
 (red lines).

\subsection{Dependence between functions and derivatives}
We consider three different scenarios; we generate functional data $y_{ij}^{(f)}(t)$ and $y_{ij}^{(d)}(t)$ such that
%
\begin{enumerate}
\item
$({\alpha}_i^{(f)}(t) , {\varepsilon}_{ij}^{(f)}(t))$ is independent on
$({\alpha}_i^{(d)}(t) , {\varepsilon}_{ij}^{(d)}(t))$;
\item
$({\alpha}_i^{(f)}(t) , {\varepsilon}_{ij}^{(f)}(t))$ and
$({\alpha}_i^{(d)}(t) , {\varepsilon}_{ij}^{(d)}(t))$
are mildly dependent (the degree of dependence is randomly obtained); 
\item
$({\alpha}_i^{(f)}(t) , {\varepsilon}_{ij}^{(f)}(t))$ and
$({\alpha}_i^{(d)}(t) , {\varepsilon}_{ij}^{(d)}(t))$
are fully dependent:
$({\alpha}_i^{(d)}(t) , {\varepsilon}_{ij}^{(d)}(t)) = ({{\alpha}_i^{(f)}}'(t) , {{\varepsilon}_{ij}^{(f)}}'(t))$, and 
hence $y_{ij}^{(d)}(t)={y_{ij}^{(f)}}'(t)$.
\end{enumerate}  

The performance of the different estimators is evaluated by comparing the $H^1$-norm of the $p$-components of the
estimation errors. Figures~\ref{fig:MC1} depicts the Montecarlo distribution of the $H^1$-norm of the first component: 
$\| \hat{\bbeta}_{\lambda,1}(t) - {\bbeta}_1(t) \|_{H^1} $ for different values of 
$\lambda$ (red box-plot,  \eqref{eq:Hbbeta}), $\| \hat\bbeta^{(c)}_{\lambda,1}(t) - {\bbeta}_1(t) \|_{H^1} $ 
for different values of 
$\lambda$ (yellow box-plots) and 
$\| {\HbbetaR}_1(t) - {\bbeta}_1(t) \|_{H^1} $ (blue box-plot).
\begin{figure}
\begin{center}
\fbox{\includegraphics[width=.48\textwidth]{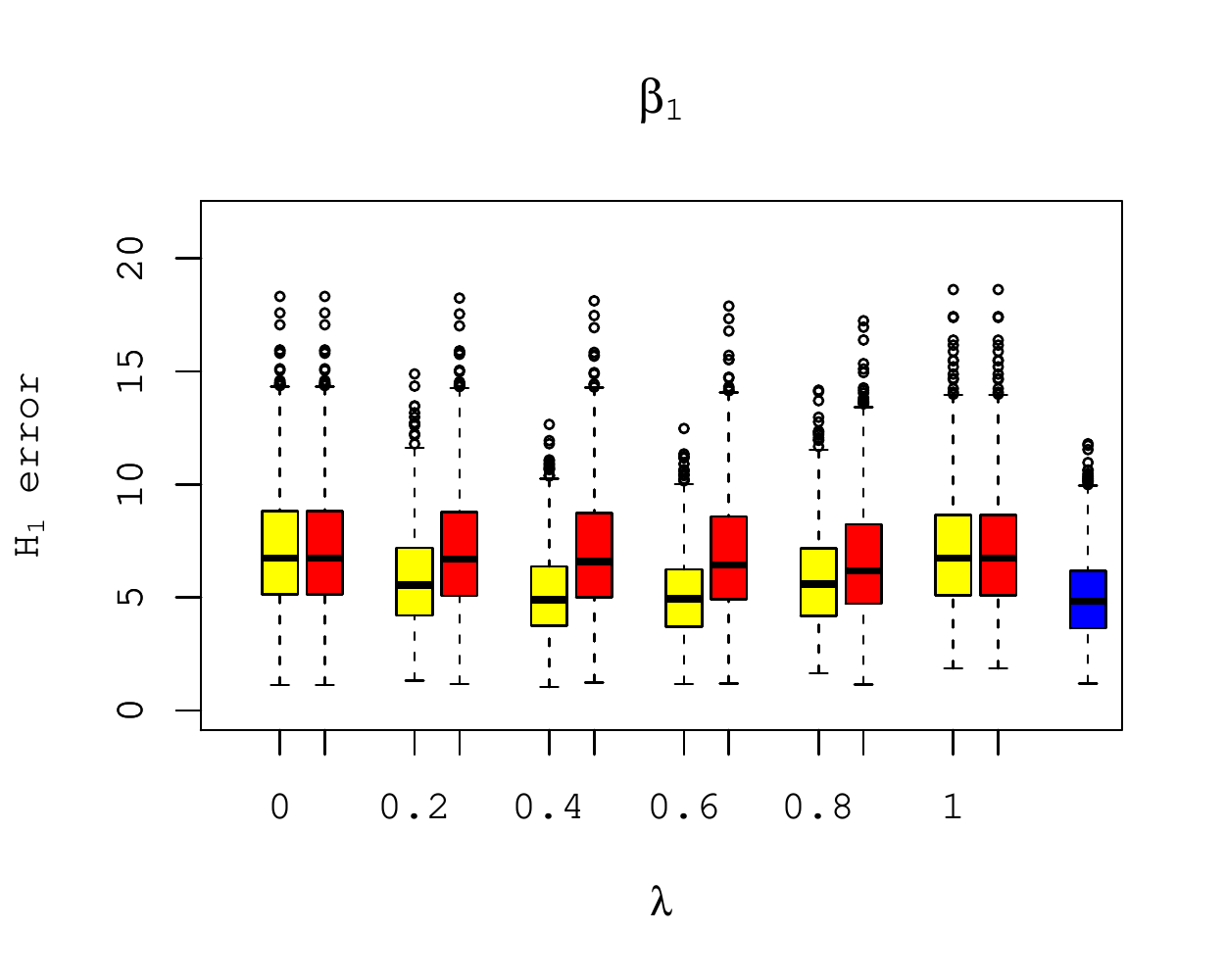}} 
\fbox{\includegraphics[width=.48\textwidth]{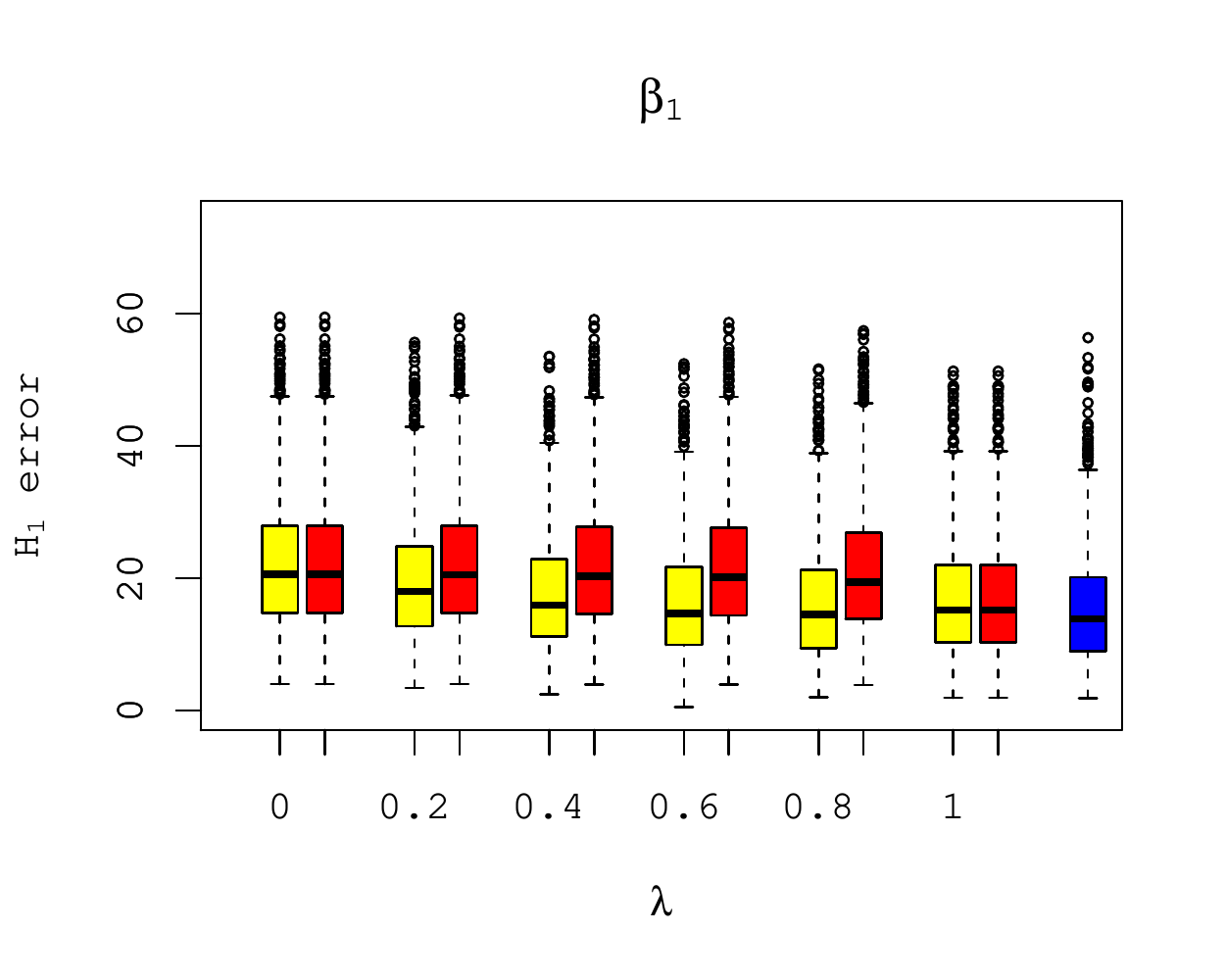}}
\fbox{\includegraphics[width=.48\textwidth]{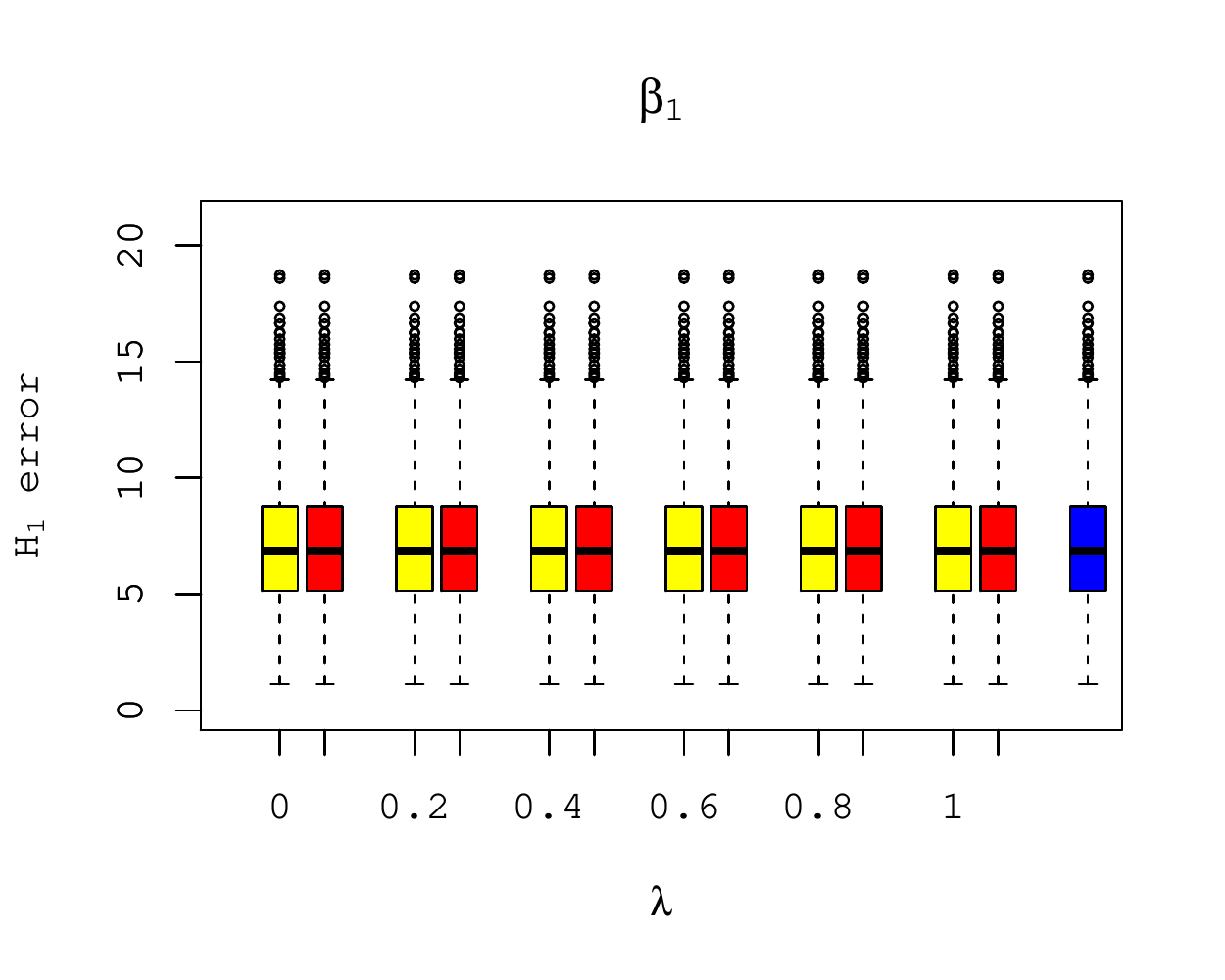}}
\end{center}
\caption{$H^1$ norm of the first components estimation errors, for compound OLS estimators (yellow box-plots),
OLS estimators (red box-plots), {\color{red}S}%
BLUE estimators (blue box-plots).
Top-left panel: scenario 1, independence. Top-right panel: scenario 2, mild dependence. 
Bottom panel: scenario 3, full dependence.}\label{fig:MC1}
\end{figure}

From the comparison of the box-plots corresponding to $\lambda=0$ and $\lambda=1$ with the other cases, we can observe that it is always more convenient to use the whole information on $y^{(f)}(t)$ and $y^{(d)}(t)$ (this behaviour is more evident in scenario 1). Among the three estimators $\hat\bbeta^{(c)}_{\lambda}(t) $, $\hat{\bbeta}_\lambda(t) $ and 
${\HbbetaR}(t)$, the 
{\color{red}S}BLUE is the most precise, as expected. 
When there is a one-to-one dependence between  $y^{(f)}(t)$ and $y^{(d)}(t)$, one source of information is redundant and all the functional estimators coincide (bottom panel of Figure~\ref{fig:MC1}). 

\subsection{Spread of functions and derivatives}
Also in this case, we consider three different scenarios. Let 
$$
r_{ll} =  \frac{\big(\Sigma_{ \hat\bbeta^{(f)} }\big)_{ll}}{\big(\Sigma_{ \hat\bbeta^{(d)} }\big)_{ll}}, \qquad l=1,\ldots,p,
$$ 
where $\Sigma_{ \cdot }$ denotes the  the $H^1$-global covariance matrix  defined in \eqref{eq:sigma}.
We generate functional data $y_{ij}^{(f)}(t)$ and $y_{ij}^{(d)}(t)$ with a different spread, such that
\begin{enumerate}
\item
$r_{ll} \cong 0.25 $ (in this sense, $y_{ij}^{(f)}(t)$ is ``more concentrate'' than $y_{ij}^{(d)}(t)$);
\item
$r_{ll} \cong 1 $ ($y_{ij}^{(f)}(t)$ and $y_{ij}^{(d)}(t)$ have more or less the same spread);
\item
$r_{ll} \cong 4 $ ($y_{ij}^{(d)}(t)$ is ``more concentrate'' than $y_{ij}^{(f)}(t)$).
\end{enumerate}  

As before, the performance of the different estimators is evaluated by comparing the $H^1$-norm of the $p$-components of the
estimation errors. Figures~\ref{fig:MC2} depicts the Montecarlo distribution of the $H^1$-norm of the first component: 
$\| \hat{\bbeta}_{\lambda,1}(t) - {\bbeta}_1(t) \|_{H^1} $ for different values of 
$\lambda$ (red box-plot,  \eqref{eq:Hbbeta}), $\| \hat\bbeta^{(c)}_{\lambda,1}(t) - {\bbeta}_1(t) \|_{H^1} $ 
for different values of 
$\lambda$ (yellow box-plots) and 
$\| {\HbbetaR}_1(t) - {\bbeta}_1(t) \|_{H^1} $ (blue box-plot).
\begin{figure}
\begin{center}
\fbox{\includegraphics[width=.48\textwidth]{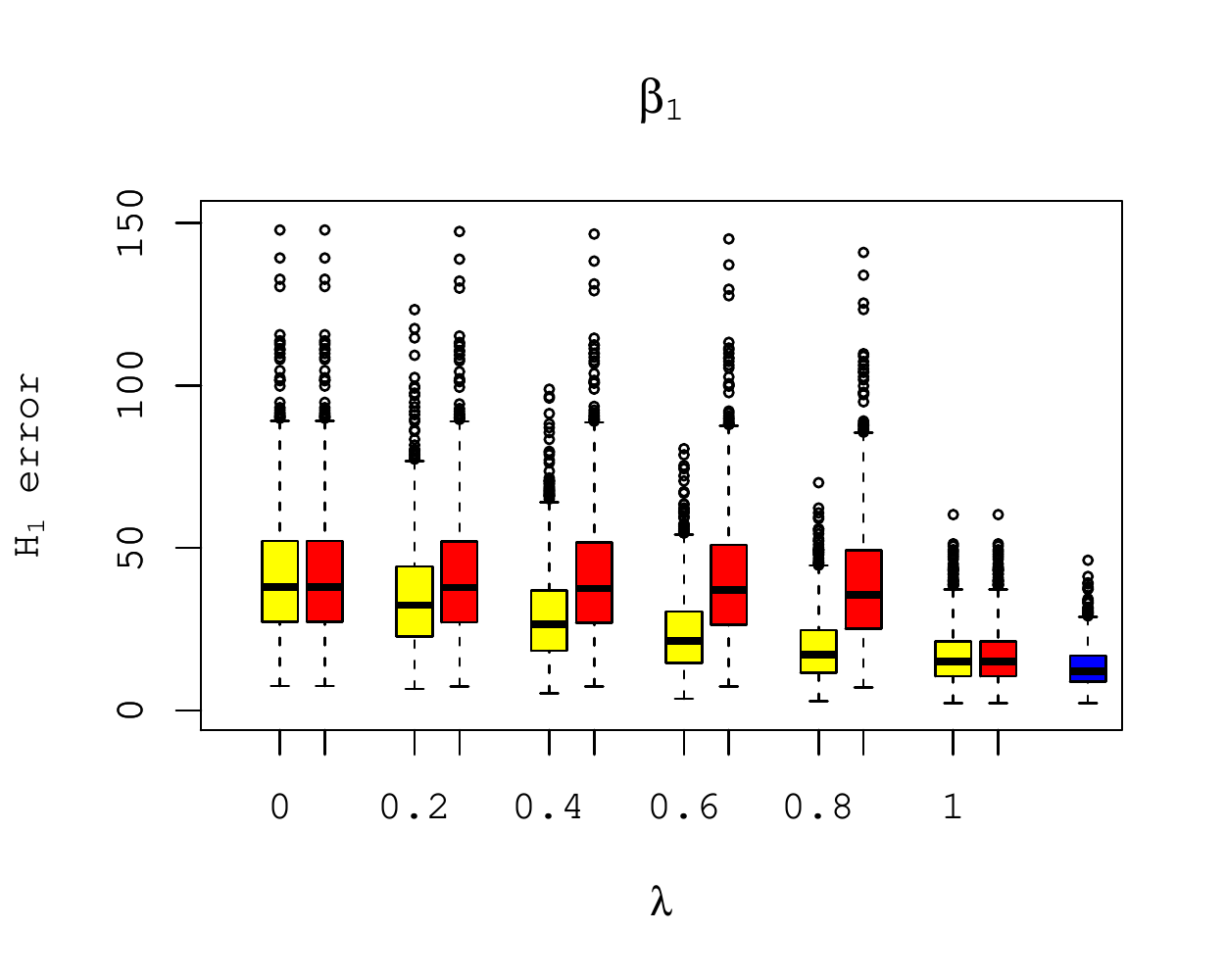}} 
\fbox{\includegraphics[width=.48\textwidth]{fig2B.pdf}}
\fbox{\includegraphics[width=.48\textwidth]{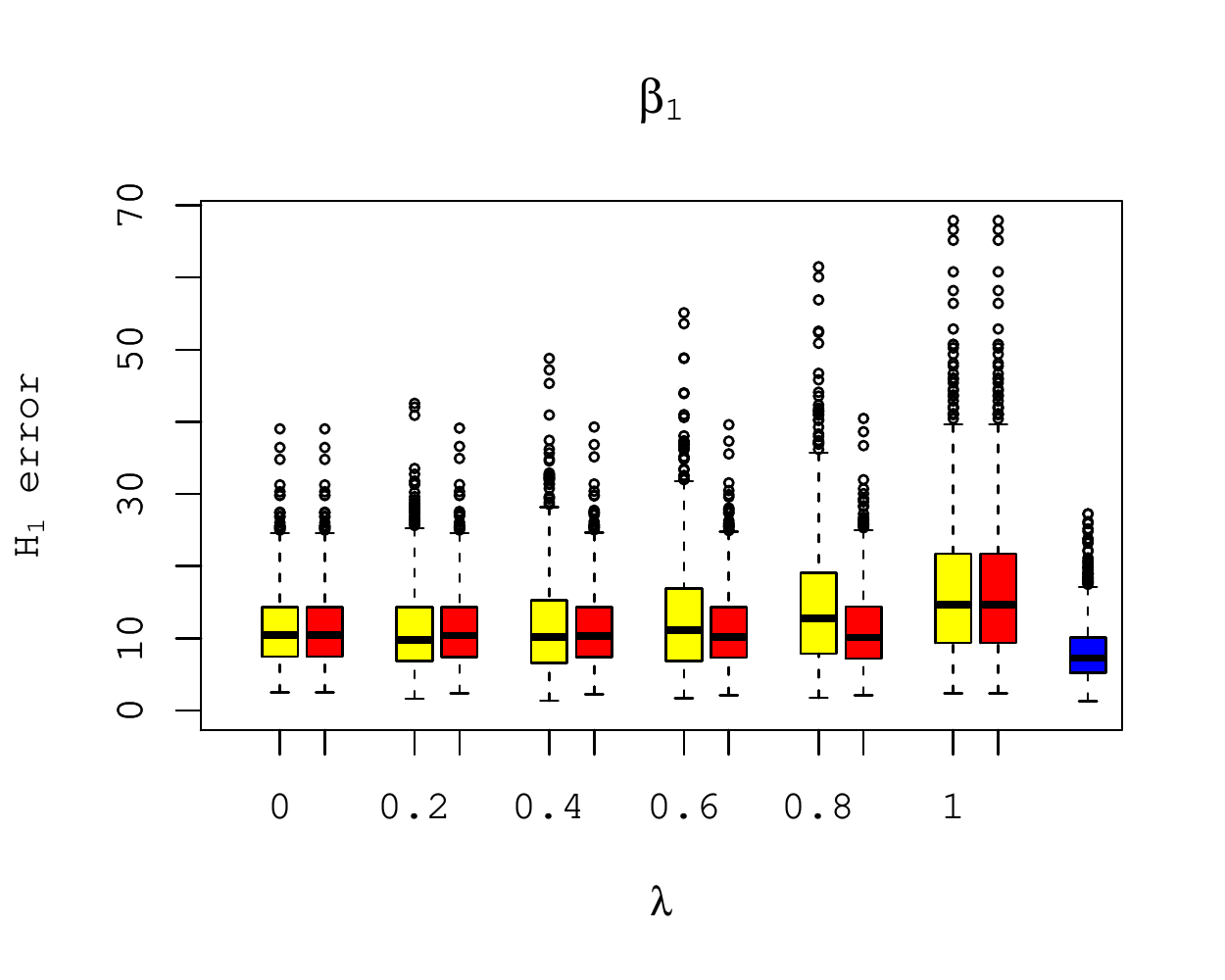}}
\end{center}
\caption{$H^1$ norm of the first components estimation errors, for compound OLS estimators (yellow box-plots),
OLS estimators (red box-plots), {\color{red}S}%
BLUE estimators (blue box-plots).
Top-left panel: scenario 1, independence. Top-right panel: scenario 2, mild dependence. 
Bottom panel: scenario 3, full dependence.}\label{fig:MC2}
\end{figure}

From the comparison of the box-plots of  $\hat\bbeta^{(c)}_{\lambda}(t) $ and $\hat{\bbeta}_\lambda(t) $ corresponding to $\lambda=0$ and $\lambda=1$ with the other cases, 
it seems more convenient to use just the less ``less spread'' information: $y^{(f)}(t)$ in Scenario 1 and $y^{(d)}(t)$ in Scenario 2. 
Comparing the precision of $\hat\bbeta^{(c)}_{\lambda}(t) $ and $\hat{\bbeta}_\lambda(t) $ with the one of the ${\HbbetaR}(t)$, however, the 
{\color{red}S}BLUE is the most precise, as expected. 
Hence, we suggest the use of the whole available information through the use of the {\color{red}S}BLUE.
Of course, when one of the sources of information has a spread near to zero then the more precise estimator is the one that uses just that piece of information and 
${\HbbetaR}(t)$ reflects this behaviour.

\section{Summary}\label{sect:discuss}
Functional data are suitably modelled in separable Hilbert spaces (see \citet{HorKok:HK12} and \citet{Bosq00}) and  $L^2$ is usually sufficient to handle the majority of the techniques proposed in the literature of functional data analysis.

Differently, we consider proper Sobolev spaces, since we guess that the data may provide information on both curve functions and their derivatives. The classical theory for linear regression models is extended to this context by means of the sample of Riesz representatives. Roughly speaking, the Riesz representatives are ``quantities'' which incorporate both functions and derivatives information in a non trivial way. \rosso{More in detail, a generalization of the Riesz representatives is proposed to take into account the possible correlation between curves and derivatives. These generalized Riesz representatives are called just \lq\lq representatives''.
Using a sample of representatives, we prove a strong, generalized version of the well known Gauss-Markov theorem for functional linear regression models.
 Despite the complexity of the problem we obtain an elegant and simple solution, through the use of the representatives which belong to a Sobolev space. This result states that the proposed estimator, which takes into account both information about curves and derivatives (throughout the representatives), is much \rosso{more} efficient than the usual OLS estimator based only on one sample of functions (curves or derivatives). 
 The superiority of the proposed estimator is also showed in the simulation study described in Section \ref{sect:simul}.}

\appendix

\section{Proofs}

\begin{proof}[Proof of Theorem~\ref{thm:Riesz_representation}]
Part a). We consider the sum of square residuals:
\begin{multline*}
\begin{aligned}
S\Big( \bbeta(t) \Big) &= \sum_{j=1}^r \sum_{i=1}^n \Big( \|
{y}_{ij}^{(f)}(t) - \mathbf{f}(\mathbf{x}_i)^T {\bbeta}(t)
\|^2_{L^2} + \|
{y}_{ij}^{(d)}(t) - \mathbf{f}(\mathbf{x}_i)^T {\bbeta}'(t)
\|^2_{L^2}
\Big) \\
& = \sum_{j=1}^r \sum_{i=1}^n \Big( \langle
{y}_{ij}^{(f)}(t) - \mathbf{f}(\mathbf{x}_i)^T {\bbeta}(t) ,
{y}_{ij}^{(f)}(t) - \mathbf{f}(\mathbf{x}_i)^T {\bbeta}(t)
\rangle_{L^2} \\
& \qquad\qquad+ \langle
{y}_{ij}^{(d)}(t) - \mathbf{f}(\mathbf{x}_i)^T {\bbeta}'(t) ,
{y}_{ij}^{(d)}(t) - \mathbf{f}(\mathbf{x}_i)^T {\bbeta}'(t)
\rangle_{L^2}
\Big)
\end{aligned}
\end{multline*}

The G\^ateaux derivative
of $S( \cdot )$ at $\bbeta(t)$ in the direction of $\mathbf{g}(t)\in (H^1)^p$ is
\begin{align}
\notag
\lim_{h\to 0} \frac{S( \bbeta(t) + h \mathbf{g}(t)) - S( \bbeta(t) ) }{h}  = &
2 \bigg(
\sum_{j=1}^r \sum_{i=1}^n \Big( \langle
{y}_{ij}^{(f)}(t) - \mathbf{f}(\mathbf{x}_i)^T {\bbeta}(t) , \mathbf{f}(\mathbf{x}_i)^T {\mathbf{g}}(t)
\rangle_{L^2}
\\ \notag
&   +\langle
{y}_{ij}^{(d)}(t) - \mathbf{f}(\mathbf{x}_i)^T {\bbeta}'(t) , \mathbf{f}(\mathbf{x}_i)^T {\mathbf{g}'}(t)
\rangle_{L^2}
\Big)
\bigg)
\\
= &
\label{eq:derivata}
2r
\Big( \langle
F^T \bar{\mathbf{y}}^{(f)}(t)- F^TF {\bbeta}(t) , {\mathbf{g}}(t)
\rangle_{(L^2)^p}
\\ \notag
&
+\langle
F^T \bar{\mathbf{y}}^{(d)}(t) - F^TF  {\bbeta}'(t) , {\mathbf{g}'}(t)
\rangle_{(L^2)^p}
\Big),
\end{align}
where $\bar{\mathbf{y}}^{(f)}(t)$ and $\bar{\mathbf{y}}^{(d)}(t)$ are two $n\times 1$ vectors whose $i$-th elements are
\begin{equation}\label{eq:meanObs}
\bar{\mathbf{y}}^{(f)}_i(t) = \frac{\sum_{j=1}^r {y}_{ij}^{(f)}(t)}{r} , \qquad
\bar{\mathbf{y}}^{(d)}_i(t) = \frac{\sum_{j=1}^r {y}_{ij}^{(d)}(t)}{r}.
\end{equation}
Developing the right-hand side of (\ref{eq:derivata}), we have that the G\^ateaux derivative is
\begin{align}
 \notag
= &
2r \bigg(
\Big( \langle
F^T  \bar{\mathbf{y}}^{(f)}(t)  , {\mathbf{g}}(t)
\rangle_{(L^2)^p}  +\langle
F^T \bar{\mathbf{y}}^{(d)}(t)  , {\mathbf{g}'}(t)
\rangle_{(L^2)^p}
\Big)
\\ \notag
&
-\Big( \langle
F^TF {\bbeta}(t) , {\mathbf{g}}(t)
\rangle_{(L^2)^p}  +\langle
F^TF  {\bbeta}'(t) , {\mathbf{g}'}(t)
\rangle_{(L^2)^p}
\Big)
\bigg)
\\ \label{eq:GD}
= &
2r
\Big( \langle
F^T  \bar{\mathbf{y}}(t)  , {\mathbf{g}}(t)
\rangle_{(H^1)^p}
- \langle
F^TF {\bbeta}(t) , {\mathbf{g}}(t)
\rangle_{(H^1)^p}
\Big),
\end{align}
where $\bar{\mathbf{y}}(t)$ is a $n\times 1$ vector whose $i$-th element is the Riesz representative of $\Big(\bar{\mathbf{y}}^{(f)}_i(t),\bar{\mathbf{y}}^{(d)}_i(t) \Big)$.

The G\^ateaux derivative \eqref{eq:GD} is equal to $0$ for any ${\mathbf{g}}(t)\in (H^1)^p$ if and only if  ${\Hbbeta} (t)$
is given by the following equation:
\[
F^TF{\Hbbeta} (t) = F^T \bar{\mathbf{y}}(t),
\]
which proves the first statement of the theorem.

\bigskip

Part b) Definition \ref{DEf:def3} and  model \eqref{eq:rMeas} imply that, for any $h(t)\in H^1$,
\begin{align*}
\big\langle E(\tilde{y}_{ij}(t)),h(t) \big\rangle_{H^1} & =
E\big(\langle {y}^{(f)}_{ij}(t),h(t) \rangle_{L^2}\big) + E\big(\langle {y}^{(d)}_{ij}(t),h'(t) \rangle_{L^2}\big) \\
&=
\big\langle \mathbf{f}(\mathbf{x}_i)^T {\bbeta}(t) , h(t)\big\rangle_{H^1},
\end{align*}
then $ E(\bar{\mathbf{y}}(t)) = F {\bbeta}(t) $, and hence $\Hbbeta(t)$ is unbiased.
Moreover,
\begin{equation}\label{eq:Errors_H1}
\bar{{y}}_i(t) - \mathbf{f}(\mathbf{x}_i)^T {\bbeta}(t)  = \tilde{\alpha}_i(t) + \frac{\sum_{j=1}^r \tilde{\varepsilon}_{ij}(t)}{r},
\qquad i = 1,\ldots, n
\end{equation}
where $\tilde{\alpha}_i(t) $ and $\tilde{\varepsilon}_{ij}(t)$ denote the Riesz representatives of $({\alpha}^{(f)}_i(t),{\alpha}^{(d)}_i(t)) $ and $({\varepsilon}_{ij}^{(f)}(t),{\varepsilon}_{ij}^{(d)}(t))$, respectively. From the hypothesis (\ref{hp:1}) and (\ref{hp:2}) in the model \eqref{eq:rMeas}, the left-hand side quantities in \eqref{eq:Errors_H1} are zero-mean i.i.d.\ processes, for $ i = 1,\ldots, n $.
Therefore, the \rosso{global} covariance matrix of $\bar{\mathbf{y}}(t)$ is $\sigma^2 I_n$, where
$\sigma^2 = E(\| \bar{{y}}_i(t) - \mathbf{f}(\mathbf{x}_i)^T {\bbeta}(t) \|^2_{H^1})$. Hence, the \rosso{global} covariance matrix of $\Hbbeta (t)$ is
$\Sigma_{\Hbbeta}= \sigma^2 (F^TF)^{-1}$.
\end{proof}

\begin{proof}[Proof of Lemma~\ref{lem:eRinH1}] We have that
\begin{align*}
E\| e_i^{(\mathbf{R})} \|^2_{{H}^1} & = E \sum_{h}
\Big\langle \boldsymbol\Psi_{h}, \sum_k \sqrt{\lambda_k} e_{i,k} ({\psi^{(\mathbf{R})}_k},{\psi^{(\mathbf{R})}_k}') \Big\rangle_{\mathbf{L}^2}^2\\
& = \sum_{h} E  \sum_{k_1,k_2} \sqrt{\lambda_{k_1}}\sqrt{\lambda_{k_2}}  e_{i,k_1} e_{i,k_2} 
\langle \boldsymbol\Psi_{h}, ({\psi^{(\mathbf{R})}_{k_1}},{\psi^{(\mathbf{R})}_{k_1}}') \rangle_{\mathbf{L}^2}
\langle \boldsymbol\Psi_{h}, ({\psi^{(\mathbf{R})}_{k_2}},{\psi^{(\mathbf{R})}_{k_2}}') \rangle_{\mathbf{L}^2}\\
& = \sum_{k,h}
\lambda_k \langle \boldsymbol\Psi_{h}, ({\psi^{(\mathbf{R})}_k},{\psi^{(\mathbf{R})}_k}') \rangle_{\mathbf{L}^2}^2 
= \sum_{k,h}
\lambda_h ( \langle \boldsymbol\Phi_{h}, ({\phi^{(\mathbf{R})}_k},{\phi^{(\mathbf{R})}_k}') \rangle_{\mathbf{L}^2_{\mathbf{R}}}  )^2  
\intertext{From \eqref{eq:tricks}, the last term is equal to $\sum_{k,h}
\lambda_h ( \langle ({\phi^{(\mathbf{R})}_h},{\phi^{(\mathbf{R})}_h}') , \boldsymbol\Phi_{k}\rangle_{\mathbf{L}^2_{\mathbf{R}}}  )^2  $.
Hence,}
E\| e_i^{(\mathbf{R})} \|^2_{{H}^1}  & = \sum_{k,h}
\lambda_h ( \langle ({\phi^{(\mathbf{R})}_h},{\phi^{(\mathbf{R})}_h}') , \boldsymbol\Phi_{k}\rangle_{\mathbf{L}^2_{\mathbf{R}}}  )^2  
 = \sum_{h}
\lambda_h \sum_{k} ( \langle ({\phi^{(\mathbf{R})}_h},{\phi^{(\mathbf{R})}_h}'), \boldsymbol\Phi_{k}\rangle_{\mathbf{L}^2_{\mathbf{R}}}  )^2  \\
& = \sum_{h}
\lambda_h \| ({\phi^{(\mathbf{R})}_h},{\phi^{(\mathbf{R})}_h}') \|_{\mathbf{L}^2_{\mathbf{R}}}^2  \leq \sum_{h}
\lambda_h ,
\end{align*}
where the last inequality follows from \eqref{eq:projH1Rbase}. Since $\sum_{h}\lambda_h < \infty$, we get the thesis.
\end{proof}

\subsection{Proof of Theorem~\ref{GM-Repeated}}
\label{appendix}
The estimator $\HbbetaR(t)$ is a linear map which associates an element $\HbbetaR(t)$ in $(H^1)^{p}$
to any $n r$-tuple $(y_{ij}^{(f)}(t), y_{ij}^{(d)}(t))$. 
In what follows, we show that it is the ``best'' among all the linear unbiased closed operators
$\mathbf{C}: {\mathcal{R}}\to (H^1)^p$.

The model \eqref{eq:rMeas} may be written in the following vectorial form:
\begin{equation}\label{eq:rMeasVect}
\left\{
\begin{aligned}
{\mathbf{y}}^{(f)} (t) & = (F \otimes \mathbf{1}_r) {\bbeta}(t) + ({\boldsymbol{\alpha}}^{(f)}(t) \otimes \mathbf{1}_r) +
 {\boldsymbol{\varepsilon}}^{(f)}(t)\\
{\mathbf{y}}^{(d)} (t) & = (F \otimes \mathbf{1}_r) {\bbeta}'(t) + ({\boldsymbol{\alpha}}^{(d)}(t) \otimes \mathbf{1}_r) +
 {\boldsymbol{\varepsilon}}^{(d)}(t)\\
\end{aligned}
\right.
\end{equation}
where $\mathbf{1}_r $ is the column vector of  length $r$ with all components equal to $1$.\\
\rosso{In general, if 
	 $$\mathbf{y}^{(1)}(t) = \Big({y}^{(1)}_{11} (t), \ldots ,{y}^{(1)}_{1r} (t),
{y}^{(1)}_{21} (t), \ldots ,{y}^{(1)}_{2r} (t),\ldots,{y}^{(1)}_{n1} (t), \ldots ,{y}^{(1)}_{nr} (t)\Big)^T$$
and
$$\mathbf{y}^{(2)}(t) = \Big({y}^{(2)}_{11} (t), \ldots ,{y}^{(2)}_{1r} (t),
{y}^{(2)}_{21} (t), \ldots ,{y}^{(2)}_{2r} (t),\ldots,{y}^{(2)}_{n1} (t), \ldots ,{y}^{(2)}_{nr} (t)\Big)^T$$
are two} $nr\times 1$ block vectors in $\mathcal{R}$, we may define the following $n$ dimensional vector
\begin{equation}
\bar{\mathbf{y}}^{\rosso{(1,2)(\mathbf{R})}}(t) = \Big(\bar{y}^{\rosso{(1,2)(\mathbf{R})}}_1 (t), 
\ldots ,\bar{y}^{\rosso{(1,2)(\mathbf{R})}}_n (t)\Big)^T,
\label{eq:riesz12}
\end{equation}
where $\bar{y}^{\rosso{(1,2)(\mathbf{R})}}_i (t) $ is the $H^1_{\mathbf{R}}$ representation of
\[
\Big(\frac{\sum_{j=1}^r {y}^{(1)}_{ij} (t)  }{r} ,
\frac{\sum_{j=1}^r {y}^{(2)}_{ij} (t)  }{r}
\Big) .
\]
Now we can introduce the following linear operator
\begin{equation} \label{eq:def_Dop}
\mathbf{D}\Big({\mathbf{y}}^{(1)} (t),{\mathbf{y}}^{(2)} (t)\Big) = \mathbf{C}\Big({\mathbf{y}}^{(1)} (t),{\mathbf{y}}^{(2)} (t) \Big)
-(F^T F)^{-1} F^T \ \bar{\mathbf{y}}^{\rosso{(1,2)(\mathbf{R})}} (t).
\end{equation}
Hence,
\begin{align} \label{eq:def_D}
\mathbf{D}({\mathbf{y}}^{(f)} (t),{\mathbf{y}}^{(d)} (t)) &= \mathbf{C}({\mathbf{y}}^{(f)} (t),{\mathbf{y}}^{(d)} (t))
-(F^T F)^{-1} F^T \bar{\mathbf{y}}^{(\mathbf{R})}(t) \\
& = \mathbf{C}({\mathbf{y}}^{(f)} (t),{\mathbf{y}}^{(d)} (t)) - \HbbetaR (t) \notag
\end{align}
and
$$
\mathbf{C}({\mathbf{y}}^{(f)} (t),{\mathbf{y}}^{(d)} (t)) = \mathbf{D}({\mathbf{y}}^{(f)} (t),{\mathbf{y}}^{(d)} (t))+\HbbetaR(t).
$$
The thesis follows immediately if we prove that ${\mathrm{O}}(\mathbf{D}({\mathbf{y}}^{(f)} (t),{\mathbf{y}}^{(d)} (t)))$ and
${\mathrm{O}}(\HbbetaR(t))$ are uncorrelated.

Since both $\mathbf{B}(t)$ and $\HbbetaR (t)$ are unbiased, $E\Big(\mathbf{D}({\mathbf{y}}^{(f)} (t),{\mathbf{y}}^{(d)} (t))\Big)= \mathbf{0}$, and thus
we have to prove that
\begin{equation}\label{eq:scoll}
E\big\langle\ {\mathrm{O}} (\mathbf{D} ({\mathbf{y}}^{(f)} (t),{\mathbf{y}}^{(d)} (t)) )  \ ,\
{\mathrm{O}} ({\HbbetaR}(t) - {\bbeta} (t) ) \ \big\rangle_{H^1} =0,
\end{equation}
for any choice of linear operator ${{\mathrm{O}}}:(H^1)^p \to H^1$.

The proof of equality (\ref{eq:scoll}) is developed in four steps.

\bigskip

\noindent \textbf{First step}.
\emph{The goal of this step is to prove that $\mathbf D$ applied to the deterministic part of the model
$\Big((F \otimes \mathbf{1}_r) {\bbeta}(t), (F \otimes \mathbf{1}_r) {\bbeta}'(t)\Big)$ is identically null.
As a consequence,
\begin{equation}\label{eq:r0matrix001}
\mathbf D \Big({\mathbf{y}}^{(f)} (t),{\mathbf{y}}^{(d)} (t)\Big) =
\mathbf D \Big({\boldsymbol{\alpha}}^{(f)} (t) \otimes \mathbf{1}_r + {\boldsymbol{\varepsilon}}^{(f)} (t) ,
{\boldsymbol{\alpha}}^{(d)} (t) \otimes \mathbf{1}_r + {\boldsymbol{\varepsilon}}^{(d)} (t)\Big)  .
\end{equation}}

\medskip

From the linearity of the closed operator $\mathbf{C}$, and the
zero-mean hypothesis (\ref{hp:1}) and (\ref{hp:2}), we have that
\begin{align}
\notag E \Big( \mathbf{C}\big({\mathbf{y}}^{(f)} (t),{\mathbf{y}}^{(d)} (t) \big) \Big) & =
E \Big( \mathbf{C}\big( (F \otimes \mathbf{1}_r) {\bbeta}(t) + ({\boldsymbol{\alpha}}^{(f)}(t) \otimes \mathbf{1}_r) +
 {\boldsymbol{\varepsilon}}^{(f)}(t), \\
\notag & \qquad\qquad  (F \otimes \mathbf{1}_r) {\bbeta}'(t) + ({\boldsymbol{\alpha}}^{(d)}(t) \otimes \mathbf{1}_r) +
 {\boldsymbol{\varepsilon}}^{(d)}(t) \big) \Big)\\
\notag & = \mathbf{C}((F \otimes \mathbf{1}_r) {\bbeta}(t), (F \otimes \mathbf{1}_r) {\bbeta}'(t) ).
\end{align}
Since $E \Big( \mathbf{C}\big({\mathbf{y}}^{(f)} (t),{\mathbf{y}}^{(d)} (t) \big) \Big)={\bbeta}(t)$
we have that
\begin{equation}
\label{eq:DF0_1}
\mathbf{C}\Big((F \otimes \mathbf{1}_r) {\bbeta}(t), (F \otimes \mathbf{1}_r) {\bbeta}'(t) \Big) = {\bbeta}(t)
\end{equation}
In addition, from the definition (\ref{eq:riesz12}) if
$$
{\mathbf{y}}^{(1)} (t) =  F{\bbeta}(t) \otimes \mathbf{1}_r\quad {\rm and} \quad
\ {\mathbf{y}}^{(2)} (t)=  F{\bbeta}'(t) \otimes \mathbf{1}_r
$$
then
\begin{equation}\label{eq:DF0_2}
\bar{\mathbf{y}}^{\rosso{\rosso{(1,2)(\mathbf{R})}}} (t) =  F{\bbeta}(t).
\end{equation}
Combining \eqref{eq:def_Dop}, \eqref{eq:DF0_1} and \eqref{eq:DF0_2} gives
\begin{equation}\label{eq:DF0}
\mathbf{D}((F \otimes \mathbf{1}_r) {\bbeta}(t), (F \otimes \mathbf{1}_r) {\bbeta}'(t) )=\mathbf{0},
\end{equation}
and hence \eqref{eq:r0matrix001}.

\bigskip

\noindent \textbf{Second step}. \emph{Representation of the linear operator $D_{l}$.}

\medskip

For the linearity of the $l$-th component of $\mathbf{D}$ with respect to the bivariate
observations $\Big({y}^{(1)}_{ij}(t),{y}^{(2)}_{ij}(t)\Big)$:
\begin{equation}\label{eq:rep_D1}
D_{l}\Big({\mathbf{y}}^{(1)} (t),{\mathbf{y}}^{(2)} (t)\Big) =
 \sum_{i,j} D_{l,ij}\Big({y}^{(1)}_{ij}(t),{y}^{(2)}_{ij}(t)\Big),
\end{equation}
where, for any $i=1,\ldots,n $ and $j=1,\ldots, r$, $D_{l,ij}$ is linear. The domain of
$D_{l,ij}$ is contained in $L^2(\mathbf{R}^2)$.
Let \rosso{$(\phi_{g} )_g$ be
an orthonormal base of $H^1_{\mathbf{R}}$}.
\rosso{We express the linear operator $y = D_{l,ij}(x)$ in terms of the base $(\boldsymbol\Psi_{k} )_k$ 
for $x$ and $(\phi_{g} )_g$ for $y$. In fact, $\mathcal{R}\subseteq (\mathbf{L}^2)^{n r}$ and
$y\in H^1\subseteq K^*$ (see \eqref{eq:H1RRreprK}). Accordingly,
\begin{equation}\label{eq:rep_D2}
D_{l,ij}({y}^{(1)}_{ij}(t),{y}^{(2)}_{ij}(t)) \\
= \sum_{k,g}
\langle \boldsymbol\Psi_{k} , ({y}^{(1)}_{ij}(t) \,,\, {y}^{(2)}_{ij}(t))^T \rangle_{\mathbf{L}^2} 
\ d^{k,g}_{l,ij} \ \phi_{g}(t),
\end{equation}
where
\[
d^{k,g}_{l,ij}  = \langle D_{l,ij}( {\boldsymbol\Psi}_{k} )(t) ,  \phi_{{g}}(t) \rangle_{H^1_{\mathbf{R}}}.
\]}

\bigskip

\noindent\textbf{Third step.} \emph{Proof of
\rosso{%
\begin{equation*}
\sum_k \sum_{i=1}^n \sum_{j=1}^r {m}_{{l_2},i} \, d^{k,g}_{l_1,ij}  
\langle \boldsymbol\Psi_{k} \,,\,  (h,h')^T \rangle_{\mathbf{L}^2}
= 0, \qquad   g,l_1,l_2,\ h\in H^1,
\end{equation*}
where $\mathbf{m}_{l_2}= ({m}_{{l_2},1}, \ldots , {m}_{{l_2},n})^T$ is the ${l_2}$-th row of $(F^TF)^{-1}F^T$.
In particular, since $H^1_{\mathbf{R}} \subseteq H^1$,
\begin{equation}\label{eq:r0matrix1}
\sum_{i,j,k}^n {m}_{{l_2},i} \, d^{k,g}_{l_1,ij}  
\langle \boldsymbol\Psi_{k} \,,\,  {m}_{{l_2},i} (h,h')^T \rangle_{\mathbf{L}^2}
= 0, \qquad   g,l_1,l_2,\ h\in H^1_{\mathbf{R}}.
\end{equation}
}}
\medskip

Let $\mathbf{h}^{({l_2})}(t) \in (H^1)^p$ be the null vector
except for the ${l_2}$-th component which is $h(t)\in H^1$, and let
$\mathbf{h}(t)= (F^TF)^{-1} \mathbf{h}^{({l_2})}(t) \in (H^1)^p$.
Setting ${\bbeta}(t)=\mathbf{h}(t)$ in \eqref{eq:DF0},
\begin{align}
0 & = D_{l_1} ((F \otimes \mathbf{1}_r) \mathbf{h}(t), (F \otimes \mathbf{1}_r) \mathbf{h}' (t) )
\nonumber
\\
& = D_{l_1} ((F \mathbf{h} (t)) \otimes \mathbf{1}_r , (F \mathbf{h}' (t)) \otimes \mathbf{1}_r  )\nonumber
\\
& = D_{l_1} (F (F^TF)^{-1} \mathbf{h}^{({l_2})} (t) \otimes \mathbf{1}_r , F (F^TF)^{-1} {\mathbf{h}^{({l_2})}}' (t) \otimes \mathbf{1}_r )\nonumber
\\
& = D_{l_1} ( h(t) \mathbf{m}_{l_2}  \otimes \mathbf{1}_r , h'(t) \mathbf{m}_{l_2}  \otimes \mathbf{1}_r )\nonumber
\\
&= \sum_{i=1}^n \sum_{j=1}^r D_{l_1,ij} ( h(t) {m}_{{l_2},i} \,,\,  h'(t) {m}_{{l_2},i} ) \nonumber \\
& \rosso{%
= \sum_{g} \Big(\sum_{k,i,j}
 (\langle \boldsymbol\Psi_{k} , ({m}_{{l_2},i} h \,,\, {m}_{{l_2},i} h')^T \rangle_{\mathbf{L}^2})
\, d^{k,g}_{l_1,ij} \Big) \phi_{{g}}(t),}
\label{eq:equality1}
\end{align}
where the last equality is due to (\ref{eq:rep_D2}).

\bigskip

\noindent\textbf{Fourth step.} \emph{Proof of \eqref{eq:scoll}:
\begin{equation*}
E\big\langle\ {\mathrm{O}} (\mathbf{D} ({\mathbf{y}}^{(f)} (t),{\mathbf{y}}^{(d)} (t)) )  \ ,\
{\mathrm{O}} ({\HbbetaR}(t) - {\bbeta} (t) ) \ \big\rangle_{H^1} =0,
\end{equation*}
for any choice of linear operator ${{\mathrm{O}}}:(H^1)^p \to H^1$.}

\medskip

From Theorem~\ref{cor:yR-eR} and  from \eqref{eq:HbbetaR}, 
$\HbbetaR (t) - \bbeta (t) =  (F^TF)^{-1}F^T {\mathbf{e}^{\mathbf(R)}}$, and hence
\begin{multline}
E\big\langle\ {\mathrm{O}} (\mathbf{D} ({\mathbf{y}}^{(f)} (t),{\mathbf{y}}^{(d)} (t)) )  \ ,\
{\mathrm{O}} ({\HbbetaR}(t) - {\bbeta} (t) ) \ \big\rangle_{H^1}  \\
\begin{aligned}
& =
E\big\langle\ {\mathrm{O}} (\mathbf{D} ({\mathbf{y}}^{(f)} (t),{\mathbf{y}}^{(d)} (t)) )  \ ,\
{\mathrm{O}} ( (F^TF)^{-1} F^T {\mathbf{e}^{\mathbf(R)}}(t) ) \ \big\rangle_{H^1}
\\
& =
\label{Eq:equation2}
E\Big\langle {\mathrm{O}}( \mathbf D ({\boldsymbol{\alpha}}^{(f)} (t) \otimes \mathbf{1}_r + {\boldsymbol{\varepsilon}}^{(f)} (t) ,
{\boldsymbol{\alpha}}^{(d)} (t) \otimes \mathbf{1}_r + {\boldsymbol{\varepsilon}}^{(d)} (t) ) )  \ , \\
& \qquad\qquad
{\mathrm{O}} ( (F^TF)^{-1} F^T {\mathbf{e}^{\mathbf(R)}}(t) ) \Big\rangle_{H^1},
\end{aligned}
\end{multline}
where the last equality is a consequence of \eqref{eq:r0matrix001}.

\rosso{Since $x\in (H^1)^p\subseteq (K^*)^p$ (see \eqref{eq:H1RRreprK}),
we express the linear operator $y = {\mathrm{O}} (x)$ in terms of the base 
$(\phi_{g_1} \times \phi_{g_2} \times \cdots \times \phi_{g_p} )_{g_1,\ldots,g_p}$ for $x$ 
and $(\zeta_{h} )_h$ for $y$, where  $(\zeta_h)_h$ is an orthonormal base of $H^1$.}
To begin with, from the linearity of the operator ${{\mathrm{O}}}:(H^1)^p \to H^1$, we have that
\[
{\mathrm{O}}( b_1(t), \ldots , b_p(t))
= \sum_{l=1}^p
\mathrm{O}( \underbrace{0, \ldots , 0}_{l-1\text{ times}}, b_l(t), \underbrace{0, \ldots , 0}_{p-l\text{ times}}) .
\]
Since $  b_l(t) = \sum_g \langle b_l(t)  , \phi_{g} (t) \rangle_{H^1_{\mathbf{R}}} \;\phi_{g} (t) = \sum_g b_l^g \phi_{g} (t) $,
where $ b_l^g = \langle b_l(t)  , \phi_{g} (t) \rangle_{H^1_{\mathbf{R}}} $,
we have
\[
{\mathrm{O}}( b_1(t), \ldots , b_p(t))
= \sum_{l,g} b_l^g \;
\mathrm{O}( \underbrace{0, \ldots , 0}_{l-1\text{ times}}, \phi_g(t), \underbrace{0, \ldots , 0}_{p-l\text{ times}}) .
\]
Now, setting
\[
O^{g,h}_{l} = \big\langle\mathrm{O}( \underbrace{0, \ldots , 0}_{l-1\text{ times}}, \phi_g(t), \underbrace{0, \ldots , 0}_{p-l\text{ times}}) , \zeta_{h}(t)\big\rangle_{H^1} ,
\]
then we have the representation of $\mathrm{O}$ in terms of the required bases:
\[
{\mathrm{O}}( b_1(t), \ldots , b_p(t))
= \sum_{l,g,h}
b_l^g \;O^{g,h}_{l}\;  \zeta_{h}(t)  .
\]
Hence, from Equations (\ref{Eq:equation2}),  \eqref{eq:rep_D1} and \eqref{eq:rep_D2}, the thesis \eqref{eq:scoll}
becomes
\begin{multline*}
E\Big\langle\ \sum_{l,g,h}
\Big(
\sum_{i,j,k} (\langle \boldsymbol \Psi_{k} \,,\,   ({\alpha}_{i}^{(f)}(t) + {\varepsilon}_{ij}^{(f)}(t) ,
{\alpha}_{i}^{(d)}(t) + {\varepsilon}_{ij}^{(d)}(t) )^T \rangle_{\mathbf{L}^2})\,
d^{k,g}_{l_1,ij} \Big)  O^{g,h}_{l} \;\zeta_{h}(t) \ ,
\\
\sum_{l,g,h}
\Big( \big\langle{\mathbf{e}^{\mathbf(R)}}(t)^T \mathbf{m}_{l}  \,,\,  \phi_{g} (t)\big\rangle_{H^1_{\mathbf{R}}} \Big)
O^{g,h}_{l} \;\zeta_{h}(t)
\ \Big\rangle_{H^1} =0 ,
\end{multline*}
From \eqref{eq:rGMprod} and \eqref{eq:innnerOmega}, since $\langle \zeta_{h_1},\zeta_{h_2}\rangle_{H^1} = \delta_{h_1}^{h_2},$
the left-hand side of the last equation becomes
\begin{align*}
&E\Big\langle\ \sum_{l,g,h}
O^{g,h}_{l} \;\zeta_{h}(t)
\\
&  \qquad
\Big(
\sum_{i,j,k} (\langle \boldsymbol \Psi_{k} \,,\,   ({\alpha}_{i}^{(f)}(t) + {\varepsilon}_{ij}^{(f)}(t) ,
{\alpha}_{i}^{(d)}(t) + {\varepsilon}_{ij}^{(d)}(t) )^T \rangle_{\mathbf{L}^2})\,
d^{k,g}_{l,ij} \Big) \ ,
\\
& \qquad \qquad
\sum_{l,g,h}
O^{g,h}_{l} \;\zeta_{h}(t)
\big\langle{\mathbf{e}^{\mathbf(R)}}(t)^T \mathbf{m}_{l_2}  \,,\,  \phi_{g} (t)\big\rangle_{H^1_{\mathbf{R}}}
\ \Big\rangle_{H^1} \\
&  =
E\Big( \sum_{l_1,l_2,g_1,g_2,h}
O^{g_1,h}_{l_1} O^{g_2,h}_{l_2} 
\\
&  \qquad
\sum_{i_1,j,k_1} X_{i_1j,k_1} \,
d^{k_1,g_1}_{l_1,i_1j}
\sum_{i_2,k_2} {\sqrt{\lambda_{k_2}} e_{i_2,k_2} \,\langle {\psi^{(\mathbf{R})}_{k_2}}(t) \,,\, \phi_{g_2}(t) \rangle_{H^1_{\mathbf{R}}}} \,
 {m}_{{l_2},i_2} \Big)
\\
& =
\sum_{l_1,l_2,g_1,g_2,h}
O^{g_1,h}_{l_1} O^{g_2,h}_{l_2} 
\\
&  \qquad	
\sum_{i_1,i_2,j} \sum_{k_1,k_2} \sqrt{\lambda_{k_2}} \;d^{k_1,g_1}_{l_1,i_1j}\; {m}_{{l_2},i_2}
\, E\big( X_{i_1j,k_1} e_{i_2,k_2} \big) \langle {\psi^{(\mathbf{R})}_{k_2}}(t) \,,\, \phi_{g_2}(t) \rangle_{H^1_{\mathbf{R}}}
\\
& =
\sum_{l_1,l_2,g_1,g_2,h}
O^{g_1,h}_{l_1} O^{g_2,h}_{l_2} 
\\
&  \qquad	
\sum_{i_1,i_2,j} \sum_{k_1,k_2} \delta_{i_1}^{i_2} \delta_{k_1}^{k_2} \lambda_{k_1}\;d^{k_1,g_1}_{l_1,i_1j}\; {m}_{{l_2},i_2}
\langle {\psi^{(\mathbf{R})}_{k_2}}(t) \,,\, \phi_{g_2}(t) \rangle_{H^1_{\mathbf{R}}}
\\
& =
\sum_{l_1,l_2,g_1,g_2,h}
O^{g_1,h}_{l_1} O^{g_2,h}_{l_2} 
\sum_{i,j} \sum_{k} \; d^{k,g_1}_{l_1,ij} \; {m}_{{l_2},i}
\lambda_{k} \langle {\psi^{(\mathbf{R})}_k}(t) \,,\, \phi_{g_2}(t) \rangle_{H^1_{\mathbf{R}}}
\\
& =
\sum_{l_1,l_2,g_1,g_2,h}
O^{g_1,h}_{l_1} O^{g_2,h}_{l_2} 
\sum_{i,j} \sum_{k} \; d^{k,g_1}_{l_1,ij} \; {m}_{{l_2},i}
\big(\lambda_{k} \langle \boldsymbol{\Psi}_{k}(t) \,,\, (\phi_{g_2}(t),\phi_{g_2}'(t))^T \rangle_{\mathbf{L}^2_{\mathbf{R}}} \big)
\\
& =
\sum_{l_1,l_2,g_1,g_2,h}
O^{g_1,h}_{l_1} O^{g_2,h}_{l_2} 
\sum_{i,j} \sum_{k} \; d^{k,g_1}_{l_1,ij} \; {m}_{{l_2},i}
\langle \boldsymbol{\Psi}_{k}(t) \,,\, (\phi_{g_2}(t),\phi_{g_2}'(t))^T \rangle_{\mathbf{L}^2}
\\
& = 0,
\end{align*}
the last equality being a consequence of \eqref{eq:r0matrix1}.

\bigskip

{\footnotesize{
\noindent\textbf{Acknowledgments}.
We thank an anonymous referee for his very useful comments which made us rethink more deeply this problem.}}

\end{document}